\newtheorem{thm}{Theorem}[section]
\newtheorem{cor}[thm]{Corollary}
\newtheorem{lem}[thm]{Lemma}
\newtheorem{pro}[thm]{Proposition}
\newtheorem{rem}[thm]{Remark}
\theoremstyle{definition}
\newtheorem{de}[thm]{Definition}
\numberwithin{equation}{section}
\newcommand{\ep}{\varepsilon}
\DeclareMathOperator{\supp}{supp}
\begin{document}
\title[Stable sets and chaos]{Stable sets and mean Li-Yorke chaos in positive entropy systems}
\author{Wen Huang, Jian Li and Xiangdong Ye}
\address[W.~Huang, X.~Ye]{Wu Wen-Tsun Key Laboratory of Mathematics, USTC, Chinese Academy of Sciences,
Department of Mathematics, University of Science and Technology of China,
Hefei, Anhui, 230026, P.R. China} \email{wenh@mail.ustc.edu.cn,
yexd@ustc.edu.cn}
\address[J.~Li]{Department of Mathematics, Shantou University, Shantou, Guangdong 515063, P.R. China}
\email{lijian09@mail.ustc.edu.cn}
\thanks{The authors were supported by NNSF of China (11371339). The first author was partially supported by
Fok Ying Tung Education Foundation and the Fundamental Research
Funds for the Central Universities (WK0010000014) and NNSF for
Distinguished Young Scholar (11225105). The second author was
partially supported by STU Scientific Research Foundation for
Talents (NTF12021).}

\subjclass[2000]{Primary 37B05; Secondary 54H20}

\keywords{Stable sets, mean Li-Yorke chaos, entropy}

\begin{abstract}
It is shown that in a topological dynamical system with positive
entropy, there is a measure-theoretically ``rather big'' set such
that a multivariant version of mean Li-Yorke chaos happens on the
closure of the stable or unstable set of any point from the set. It
is also proved that the intersections of the sets of asymptotic
tuples and mean Li-Yorke tuples with the set of topological entropy
tuples are dense in the set of topological entropy tuples
respectively.
\end{abstract}

\maketitle

\section{Introduction}
Throughout this paper, by a {\it topological dynamical system}
$(X,T)$ (t.d.s. for short) we mean  a compact metric space $X$ with
a homeomorphism $T$ from $X$ onto itself. The metric on $X$ is
denoted by $d$. For a t.d.s. $(X,T)$, the stable set of a point
$x\in X$ is defined as
\[ W^s(x,T)=\big\{y\in X: \lim_{n\to +\infty} d(T^nx, T^ny)=0\big\} \]
and the unstable set of $x$ is defined as
\[W^u(x,T)=\big\{ y\in X:\lim_{n\to +\infty} d(T^{-n}x, T^{-n}y)=0\big\}.\]
Clearly, $W^s(x,T)=W^u(x,T^{-1})$ and $W^u(x,T)=W^s(x,T^{-1})$ for
each $x\in X$. Stable and unstable sets play a big role in the study
of smooth dynamical systems. Recently, there are many results
related to the chaotic behavior and stable (unstable) sets in
positive entropy systems (cf. \cite{BGKM,BHS,BH,D, DY, FHYZ,H}). So
it indicates that stable and unstable sets are also important in the
study of topological dynamical systems.

The chaotic behavior of a t.d.s. reflects the complexity of a t.d.s.
Among the various definitions of chaos, Devaney's chaos, Li-Yorke
chaos, positive entropy and the distributional chaos are the most
popular ones. The implication among them has attracted a lot of
attention. It is shown by Huang and Ye \cite{HY02} that Devaney's
chaos implies Li-Yorke one by proving that a non-periodic transitive
t.d.s. with a periodic point is chaotic in the sense of Li and
Yorke. In~\cite{BGKM}, Blanchard, Glasner, Kolyada and Maass proved
that positive entropy also implies Li-Yorke chaos, that is if a
t.d.s. $(X,T)$ has positive entropy then there exists an uncountable
subset $S$ of $X$ such that for any two distinct points $x,y\in S$,
\[\liminf_{n\to +\infty}\ d(T^{n}x,T^{n}y)=0 \text{ and }
 \limsup_{n\to +\infty}\ d(T^{n}x,T^{n}y) >0.\] We remark that the
authors obtained this result using ergodic method, and for a
combinatorial proof see \cite{KL}. Moreover, the result also holds
for sofic group actions by Kerr and Li \cite[Corollary 8.4]{KL12}.

In~\cite{BHS} Blanchard, Host and Ruette investigated the question
if positive entropy implies the existence of non-diagonal asymptotic
pairs. Among other things the authors showed that for positive
entropy systems, many stable sets are not stable under $T^{-1}$.
More precisely, if a $T$-invariant ergodic measure $\mu$ has
positive entropy, then there exists $\eta>0$ such that for
$\mu$-a.e. $x\in X$, one can find an uncountable subset
 $F_x$ of $W^s(x,T)$ satisfying that for any $y\in F_x$,
 \[\liminf_{n\to +\infty}\ d(T^{-n}x,T^{-n}y)=0 \text{ and }
 \limsup_{n\to +\infty}\ d(T^{-n}x,T^{-n}y) \geq \eta.\]

Distributional chaos was introduced in \cite{SS}, and there are at
least three versions of distributional chaos in the literature (DC1,
DC2 and DC3), see \cite{BJM} for the details. It is known that
positive entropy does not imply DC1 chaos \cite{Pi}. In \cite{S}
Sm\'{\i}tal conjectured that positive entropy implies DC2 chaos, and
Oprocha showed that this conjecture holds for minimal uniformly
positive entropy systems~\cite{O}. Very recently, Downarowicz
\cite{D} proved that positive entropy indeed implies DC2 chaos.
Precisely, if a t.d.s. $(X,T)$ has positive entropy then there
exists an uncountable subset $S$ of $X$ such that for any two
distinct points $x,y\in S$,
\[\liminf_{N\to +\infty}\ \frac{1}{N}\sum_{i=1}^{N}d(T^{i}x,T^{i}y)=0 \text{ and }
 \limsup_{N\to +\infty}\  \frac{1}{N}\sum_{i=1}^{N}d(T^{i}x,T^{i}y) >0.\]

Entropy pairs were introduced and studied in \cite{Bl} by Blanchard,
and the notion was extended to entropy tuples in \cite{HY} by Huang
and Ye. Nowadays, the idea of using various tuples to obtain
dynamical properties is widely accepted and developed, see for
example the surveys \cite{GY, OZ}. In~\cite{Xiong2005}, Xiong
extended Huang-Ye's result related to Li-Yorke chaos to the
multivariant version of Li-Yorke chaos. That is, if $(X,T)$ is a
non-periodic transitive system with a fixed point, then there exists
a dense Mycielski subset (countable union of Cantor sets) $S$ of $X$
such that for any $n\geq 2$ and any $n$ distinct points
$x_1,\dotsc,x_n\in S$, one has
\[\liminf_{k\to +\infty}\max_{1\leq i<j\leq n}d\big(T^k x_i , T^k x_j)=0 \
\text{and}\ \limsup_{k\to +\infty}\min_{1\leq i<j\leq n}d\big(T^k
x_i , T^k x_j)>0.\]

Motivated by the above ideas, results and also by the consideration
of Ornstein and Weiss in \cite{OW} (where they defined the notion of
mean distality) we introduce the following multivariant version of
mean Li-Yorke chaos.


A tuple $(x_1,\dotsc,x_n)\in X^{(n)}$ is called \emph{mean Li-Yorke $n$-scrambled (with modulus $\eta$)} if
\[\liminf_{N\to +\infty}\ \frac{1}{N}\sum_{k=1}^N\max_{1\leq i<j\leq n}d\big(T^k x_i , T^k x_j)=0\qquad\ \]
and
\[\limsup_{N\to +\infty}\ \frac{1}{N}\sum_{k=1}^N\min_{1\leq i<j\leq n}d\big(T^k x_i , T^k x_j)\geq \eta>0.\]

A subset  $S$ of $X$ is called \emph{mean Li-Yorke $n$-scrambled (with modulus $\eta$)} if
any $n$ distinct points in $S$ form a mean Li-Yorke $n$-scrambled tuple (with modulus $\eta$).
The system $(X,T)$ is called \emph{mean Li-Yorke $n$-chaotic} if there is an uncountable
mean Li-Yorke $n$-scrambled set.

The aim of this paper is to investigate the mean Li-Yorke chaos
appearing in the closure of stable or unstable sets of a dynamical
system with positive entropy, and the relationship among the sets of
asymptotic tuples, mean Li-Yorke tuples and topological entropy
tuples. More specifically, we have the following main results.

\begin{thm} \label{thm-1}
Let $(X,T)$ be a t.d.s. with an ergodic invariant measure $\mu$ of
positive entropy. Then there exists a sequence of positive numbers $\{\eta_n\}_{n=2}^{+\infty}$
satisfying that for $\mu$-a.e. $x\in X$, there exists a Mycielski
subset $K_x\subseteq \overline{W^s(x,T)}\cap \overline{W^u(x,T)}$
such that for every $n\geq 2$, $K_x$ is a mean Li-Yorke
$n$-scrambled set with modulus $\eta_n$ both for $T$ and for
$T^{-1}$.
\end{thm}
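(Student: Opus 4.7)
The plan is to combine three ingredients: (i) the disintegration of $\mu$ over the Pinsker $\sigma$-algebra $\mathcal{P}_\mu$, (ii) an entropy-based lower bound for the Cesàro-averaged pairwise separation of $\mu_x^{(n)}$-typical tuples, and (iii) the Kuratowski-Mycielski theorem to carve out a single Cantor-type subset of $\supp(\mu_x)$ on which the $n$-scrambled conditions hold simultaneously for all $n$ and for both $T$ and $T^{-1}$.

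First I would work with the disintegration $\mu=\int_X \mu_x\,d\mu(x)$ along $\mathcal{P}_\mu$. Extending the Blanchard-Host-Ruette argument from pairs to the support of the conditional measure itself, one shows that for $\mu$-a.e.\ $x$, $\supp(\mu_x)\subseteq\overline{W^s(x,T)}$, and the same reasoning applied to $T^{-1}$ (whose Pinsker $\sigma$-algebra coincides with $\mathcal{P}_\mu$) yields $\supp(\mu_x)\subseteq\overline{W^s(x,T)}\cap\overline{W^u(x,T)}$ on a common full-measure set. Positive entropy of $\mu$ forces $\mu_x$ to be nonatomic, so $\supp(\mu_x)$ is a perfect Polish space; any Mycielski set we subsequently extract from $\supp(\mu_x)$ will automatically land in the required intersection of closures.

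Next I would produce, for each $n\geq 2$, a constant $\eta_n>0$ such that for $\mu$-a.e.\ $x$ the product measure $\mu_x^{(n)}$ assigns full measure to tuples $(x_1,\dots,x_n)$ that are mean Li-Yorke $n$-scrambled with modulus $\eta_n$ for both $T$ and $T^{-1}$. The $\liminf=0$ part reduces to the fact that $\mu_x^{(n)}$-a.e.\ coordinate is Cesàro-asymptotic to $x$ (inherited from the conditional structure via the Mean Ergodic Theorem on the product system), so $\max_{i<j} d(T^k x_i,T^k x_j)$ is dominated by a sum of terms with vanishing Cesàro means, and the sum likewise has a vanishing Cesàro mean along a subsequence. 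The $\limsup\geq\eta_n$ part is the engine of the proof: one selects a measure-theoretic entropy $n$-tuple for $\mu$ whose coordinates are pairwise $\eta_n$-separated, and invokes the IE-tuple/combinatorial-independence characterization of entropy tuples from Kerr-Li to obtain a set of times of positive upper Banach density along which $\mu_x^{(n)}$-typical tuples remain pairwise $\eta_n$-separated; Downarowicz-style averaging then upgrades this to the required Cesàro lower bound, with $\eta_n$ depending only on $(X,T,\mu,n)$.

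Finally I would apply the Kuratowski-Mycielski theorem inside $\supp(\mu_x)$. Let $R_n\subseteq\supp(\mu_x)^n$ be the Borel set of mean Li-Yorke $n$-scrambled tuples (for both $T$ and $T^{-1}$, with modulus $\eta_n$); by the previous step, $R_n$ has full $\mu_x^{(n)}$-measure and contains a dense $G_\delta$ subset of the product (after a standard regularity argument). Kuratowski-Mycielski then yields a dense Mycielski subset $K_x\subseteq\supp(\mu_x)$ such that every $n$-tuple of distinct points of $K_x$ lies in $R_n$, simultaneously for every $n\geq 2$. Since $K_x\subseteq\supp(\mu_x)\subseteq\overline{W^s(x,T)}\cap\overline{W^u(x,T)}$, this $K_x$ meets all the requirements. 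The main obstacle I expect is the uniformity of $\eta_n$ across almost every fiber in the second step: the IE-tuple framework supplies a separation constant for global entropy tuples, but one must carry that constant through the Pinsker disintegration and from a $\limsup$ statement to an honest Cesàro lower bound, which is where the delicate interplay between ergodicity, $\mathcal{P}_\mu$, and combinatorial independence becomes essential.
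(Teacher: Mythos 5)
Your overall architecture (Pinsker disintegration, a fiberwise scrambledness statement, then Mycielski inside $\supp(\mu_x)$) matches the paper's, and your first and last steps are essentially right. But the engine of your second step contains a genuine error: it is \emph{not} true that $\mu_x^{(n)}$ assigns full measure to the mean Li-Yorke $n$-scrambled tuples. Since $\lambda_n(\mu)=\int_X\mu_x^{(n)}\,d\mu(x)$ is ergodic for $T^{(n)}$, $\mu_x^{(n)}$-a.e.\ tuple is a generic point of $\lambda_n(\mu)$, so the Ces\`aro averages $\frac1N\sum_{k=1}^N\max_{1\leq i<j\leq n}d(T^kx_i,T^kx_j)$ actually \emph{converge} to $\int\max_{1\leq i<j\leq n}d(x_i,x_j)\,d\lambda_n(\mu)>0$ (positive because $\lambda_n(\mu)(\Delta^{(n)})=0$). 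Hence the $\liminf=0$ condition fails almost surely, and the scrambled tuples form a $\lambda_n(\mu)$-\emph{null} set. Your proposed justification---that $\mu_x^{(n)}$-a.e.\ coordinate is ``Ces\`aro-asymptotic'' to $x$ via the mean ergodic theorem---is unfounded for the same reason. Consequently the appeal to a ``standard regularity argument'' to pass from full measure to a dense $G_\delta$ cannot be repaired (and is in any case invalid on its own terms: a conull set for a fully supported measure need not be residual).

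The correct resolution, which is the paper's, mixes measure and category. The set $D_{n,\eta_n}$ of tuples with $\limsup_N\frac1N\sum_{k}\min_{i<j}d(T^kx_i,T^kx_j)\geq\eta_n$ is a $G_\delta$ containing the conull (hence dense in $\supp(\mu_x)^{(n)}$) set of generic points of $\lambda_n(\mu)$, with $\eta_n=\tau\,\lambda_n(\mu)(W_n)$ obtained from an \emph{open} set $W_n$ of pairwise $\tau$-separated tuples of positive $\lambda_n(\mu)$-measure; no IE-tuple machinery is needed, and note that a set of times of positive upper \emph{Banach} density, as your Kerr--Li route would supply, does not by itself yield a Ces\`aro $\limsup$ computed from time $1$. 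The set $P_n$ of tuples with $\liminf_N\frac1N\sum_k\max_{i<j}d(T^kx_i,T^kx_j)=0$ is a $G_\delta$ containing the dense but \emph{null} set $(W^s(x,T)\cap\supp(\mu_x))^{(n)}$ of asymptotic tuples; the density of $W^s(x,T)\cap\supp(\mu_x)$ in $\supp(\mu_x)$ is the real content of the paper's Lemma 3.1 (your weaker inclusion $\supp(\mu_x)\subseteq\overline{W^s(x,T)}$ does not suffice here) and is where the excellent partition of Blanchard--Host--Ruette enters. The intersection of these dense $G_\delta$ sets and their $T^{-1}$ counterparts in $\supp(\mu_x)^{(n)}$ is residual, and only then does Mycielski's theorem apply. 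You need to replace your full-measure claim by this Baire-category argument.
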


It is observed in \cite{D} that a pair is DC2-scrambled if and only
if it is mean Li-Yorke $2$-scrambled in our sense. We remark that
the key tool in the proof of Theorem \ref{thm-1} is the excellent
partition constructed in \cite[Lemma~4]{BHS} which is different from
the one used in \cite{D}.  So among other things for $n=2$ we also
obtain a new proof of Downarowicz's result in \cite{D}.

\begin{thm} \label{thm-2} Let $(X,T)$ be a t.d.s. with positive entropy. Then
for any $n\ge 2$
\begin{enumerate}

\item the intersection of the set of asymptotic $n$-tuples with the set of
topological entropy $n$-tuples is dense in the set of topological
entropy $n$-tuples;

\item the intersection of the set of mean Li-Yorke $n$-tuples with the set of
topological entropy $n$-tuples is dense in the set of topological
entropy $n$-tuples.

\end{enumerate}

\end{thm}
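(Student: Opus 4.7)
\textbf{Proof plan for Theorem~\ref{thm-2}.} Both parts rest on the same measure-theoretic description of topological entropy tuples. Recall (Blanchard--Glasner--Host--Maass for $n=2$, Huang--Ye in general) that for every ergodic $T$-invariant measure $\mu$ with $h_\mu(T) > 0$, writing $\mathcal{P}_\mu$ for the Pinsker $\sigma$-algebra of $(X,\mu,T)$ and $\mu = \int \mu_x^{\mathcal{P}_\mu}\, d\mu(x)$ for the disintegration, the $n$-fold relative product joining
\[
\lambda_n(\mu) := \int_X (\mu_x^{\mathcal{P}_\mu})^n\, d\mu(x)
\]
satisfies $\supp(\lambda_n(\mu)) \setminus \Delta_n \subseteq E_n(X,T)$, where $E_n(X,T)$ denotes the set of topological entropy $n$-tuples and $\Delta_n$ is the generalized diagonal; moreover, the union of these supports over all such $\mu$ is dense in $E_n(X,T)$. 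Consequently both density statements will follow once one shows, for a single such $\mu$, that asymptotic (resp. mean Li--Yorke) $n$-tuples of distinct points are dense in $\supp(\lambda_n(\mu))$.

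For part~(2) the plan is to apply Theorem~\ref{thm-1} to $\mu$. This produces, for $\mu$-a.e.\ $x$, a Mycielski set $K_x \subseteq \overline{W^s(x,T)} \cap \overline{W^u(x,T)}$ such that any $n$ distinct points of $K_x$ form a mean Li--Yorke $n$-tuple with modulus $\eta_n$. The Blanchard--Host--Ruette analysis of \cite[Lemma~4]{BHS} that underlies Theorem~\ref{thm-1} also identifies $\supp(\mu_x^{\mathcal{P}_\mu})$ with a subset of $\overline{W^s(x,T)}$ for $\mu$-a.e.\ $x$, and the ``rather big'' character of the family $\{K_x\}$ is compatible with the Pinsker fibration in the sense that $\lambda_n(\mu)$-a.e.\ tuple $(y_1,\dotsc,y_n)$ lies in the closure of $\{(z_1,\dotsc,z_n)\in K_x^n : z_1,\dotsc,z_n \text{ distinct}\}$ for some $x$. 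This places mean Li--Yorke $n$-tuples densely in $\supp(\lambda_n(\mu))$ and hence in $E_n(X,T)$, as required.

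For part~(1) Theorem~\ref{thm-1} is insufficient, since it delivers mean Li--Yorke (oscillatory) behaviour rather than genuine asymptotic convergence. I would instead rework the proof of Theorem~\ref{thm-1} by isolating its first, purely asymptotic step: the excellent partition of \cite[Lemma~4]{BHS} refines arbitrarily finely along Rohlin towers, and the corresponding approximants in each Pinsker fibre have pairwise orbital distances tending to zero in positive (and, symmetrically, negative) time. Running this construction for $n$ coordinates simultaneously inside a common Pinsker fibre furnishes, $\lambda_n(\mu)$-almost surely, an $n$-tuple of distinct points $(y_1,\dotsc,y_n)$ with $\lim_{k\to+\infty} d(T^k y_i,T^k y_j) = 0$ for all $i\ne j$; by the fibrewise support identification these asymptotic $n$-tuples are dense in $\supp(\lambda_n(\mu))$.

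The main obstacle is part~(1): one cannot treat Theorem~\ref{thm-1} as a black box, so one must revisit the Pinsker-fibre machinery of \cite[Lemma~4]{BHS} and push through the construction to obtain \emph{simultaneous} orbital convergence of $n$ points in a single fibre, which requires careful multi-coordinate bookkeeping along the tower that was used, in the proof of Theorem~\ref{thm-1}, only to produce oscillatory (mean Li--Yorke) estimates. Part~(2), by contrast, is comparatively direct once the fibrewise support description and Theorem~\ref{thm-1} are combined.
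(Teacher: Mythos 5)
Your reduction is the same as the paper's: by Proposition~\ref{thm:HY-06}, $E_n^\mu(X,T)=\supp(\lambda_n(\mu))\setminus\Delta_n$ and $E_n(X,T)=\overline{\bigcup_\mu E_n^\mu(X,T)}\setminus\Delta_n$, so both parts come down to showing that asymptotic (resp.\ mean Li--Yorke) tuples are dense in $\supp(\lambda_n(\mu))$ for each ergodic $\mu$ with positive entropy, and the paper does this fibrewise over the Pinsker $\sigma$-algebra exactly as you intend. Part~(2) goes through essentially as you describe: the paper uses the fact, extracted from the \emph{proof} of Theorem~\ref{thm-1} rather than its statement, that $MLY_{n,\eta_n}(X,T)\cap\supp(\mu_x)^{(n)}$ is dense in $\supp(\mu_x)^{(n)}$ for $\mu$-a.e.\ $x$; your appeal to the density of $K_x$ in $\supp(\mu_x)$ amounts to the same thing, though you should make explicit that this density is part of the construction of $K_x$ and not of the stated conclusion of Theorem~\ref{thm-1}.

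The problem is part~(1), where you have inverted the difficulty. No ``multi-coordinate bookkeeping along the tower'' is needed: if $y_1,\dotsc,y_n$ all lie in a single stable set $W^s(x,T)$, then $d(T^ky_i,T^kx)\to 0$ for each $i$, so by the triangle inequality $(y_1,\dotsc,y_n)$ is automatically an asymptotic $n$-tuple; that is, $W^s(x,T)^{(n)}\subseteq Asy_n(X,T)$. Hence the only input required is the \emph{one-point} statement that $W^s(x,T)\cap\supp(\mu_x)$ is dense in $\supp(\mu_x)$ for $\mu$-a.e.\ $x$, which is Lemma~\ref{lem-1}, proved from the excellent partition of Lemma~\ref{pinsker-con} together with the martingale convergence $\mu_{n,x}\to\mu_x$ and Lemma~\ref{sigma-bij}. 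So part~(1) is in fact the easy half, and the genuinely hard construction is the one needed for part~(2), already packaged in Theorem~\ref{thm-1}. Separately, your intermediate claim that the construction furnishes an asymptotic tuple ``$\lambda_n(\mu)$-almost surely'' cannot be right as stated: when $h_\mu(T)>0$ the set $Asy_n(X,T)$ is $\lambda_n(\mu)$-null (the paper recalls this for $n=2$: an asymptotic pair cannot be a generic point of the ergodic system $(\supp(\lambda_2(\mu)),T\times T)$). Only density of $Asy_n(X,T)\cap\supp(\mu_x)^{(n)}$ in $\supp(\mu_x)^{(n)}$, and hence in $\supp(\lambda_n(\mu))$, holds --- and that is exactly what the stable-set observation above delivers.
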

Note that in \cite{BHS} and \cite{Gl} the authors showed that the
closure of the set of asymptotic pairs contains the set of entropy
pairs, and the intersection of the set of proximal pairs with the
set of entropy pairs is dense in the set of entropy pairs
respectively. Moreover, the authors in \cite{KL} proved that the
intersection of the set of Li-Yorke pairs with the set of entropy
pairs is dense in the set of entropy pairs. Thus Theorem \ref{thm-2}
extends the mentioned results in \cite{BHS}, \cite{Gl} and
\cite{KL}.

\medskip
Note also that our main results (Theorems~\ref{thm-1} and~\ref{thm-2}) also hold for continuous maps with
an obvious modification, see Theorems~\ref{conti} and~\ref{cor-310}.



\medskip \noindent {\bf Acknowledgement}:
We would like to thank Hanfeng Li for useful comments.
We also thank the anonymous referee for his/her helpful suggestions concerning this paper.

\section{Preliminaries}
For a t.d.s. $(X,T)$, denote by $\mathcal{B}_X$ the $\sigma$-algebra
of Borel subsets of $X$. A \emph{cover} of $X$ is a family of Borel
subsets of $X$ whose union is $X$. An \emph{open cover} is the one
consisting of open sets. A \emph{partition} of $X$ is a cover of $X$
by pairwise disjoint sets. Given a partition $\alpha$ of $X$ and
$x\in X$, denote by $\alpha(x)$ the atom of $\alpha$ containing $x$.

We denote the collection of finite partitions, finite covers and
finite open covers of $X$  by $\mathcal{P}_X$, $\mathcal{C}_X$ and
$\mathcal{C}^o_X$, respectively. Let $\mathcal{U}$ and $\mathcal{V}$
be two covers. Their \emph{join} $\mathcal{U}\bigvee \mathcal{V}$ is
the cover $\{U\cap V:\ U\in\mathcal{U},V\in\mathcal{V}\}$. For
$\mathcal{U}\in\mathcal{C}_X$, we define $N(\mathcal{U})$ as the
minimum among the cardinalities of the subcovers of $\mathcal{U}$.
The \emph{topological entropy} of $\mathcal{U}$ with respect to $T$
is
\begin{equation*}
h_{\text{top}}(T,\mathcal{U})=\lim_{N\to +\infty}{\frac1N}\log N\bigg(\bigvee_{i=0}^{N-1}T^{-i}\mathcal{U}\bigg).
\end{equation*}
The \emph{topological entropy} of $(X,T)$ is defined by
$h_{\text{top}}(T)=\sup_{\mathcal{U}\in\mathcal{C}^o_X}h_{\text{top}}(T,\mathcal{U})$.

Let $\mathcal{M}(X)$, $\mathcal{M}(X, T)$ and
$\mathcal{M}^{e}(X,T)$ be the collections of all Borel probability
measures, $T$-invariant Borel probability measures  and
$T$-invariant ergodic measures on $X$, respectively. Then
$\mathcal{M}(X)$ and $\mathcal{M}(X,T)$ are convex, compact metric
spaces when endowed with the weak$^*$-topology.

For any given $\alpha \in \mathcal{P}_X$ and  $\mu\in {\mathcal
M}(X)$, let $H_{\mu}(\alpha)=\sum_{A\in \alpha} -\mu(A) \log
\mu(A)$.  When $\mu\in \mathcal{M}(X,T)$, we define the
\emph{topological entropy} of $\alpha$ with respect to $\mu$ as
\[
h_\mu(T,\alpha)=\lim_{n\rightarrow\infty}\frac{1}{n} H_\mu\bigg(\bigvee_{i=0}^{n-1}T^{-i}\alpha\bigg).
\]
The \emph{measure-theoretic entropy} of $\mu$ is defined by
$h_\mu(T)=\sup_{\alpha \in \mathcal{P}_X} h_\mu(T,\alpha)$.

The relation between topological entropy and measure-theoretic
entropy is the following well known variational principle:
$h_{\text{top}}(T)=\sup_{\mu\in \mathcal{M}^e(X, T)} h_\mu(T)$.

Let $(X,T)$ be a t.d.s., $\mu\in \mathcal{M}(X,T)$ and
$\mathcal{B}_\mu$ be the completion of $\mathcal{B}_X$ under the
measure $\mu$. Then $(X,\mathcal{B}_\mu,\mu,T)$ is a Lebesgue
system. If $\{ \alpha_i\}_{i\in I}$ is a countable family of finite
partitions of $X$, the partition $\alpha=\bigvee_{i\in I}\alpha_i$
is called a {\it measurable partition}. The sets $A\in
\mathcal{B}_\mu$, which are unions of atoms of $\alpha$, form a
sub-$\sigma$-algebra $\mathcal{B}_\mu$ denoted by $\widehat{\alpha}$
or $\alpha$ if there is no ambiguity. Every sub-$\sigma$-algebra of
$\mathcal{B}_\mu$ coincides with a $\sigma$-algebra constructed in
this way (mod $\mu$).

For a measurable partition $\alpha$, put
$\alpha^-=\bigvee_{n=1}^{+\infty} T^{-n}\alpha$ and
$\alpha^T=\bigvee_{n=-\infty}^{+\infty} T^{-n}\alpha$. Define in
the same way $\mathcal{F}^-$ and $\mathcal{F}^T$ if $\mathcal{F}$
is a sub-$\sigma$-algebra of $\mathcal{B}_\mu$. It is clear that
for a measurable partition $\alpha$ of $X$,
$\widehat{\alpha^-}=(\widehat{\alpha})^-$ and
$\widehat{\alpha^T}=(\widehat{\alpha})^T$(mod $\mu$).

Let $\mathcal{F}$ be a  sub-$\sigma$-algebra of $\mathcal{B}_\mu$ and
$\alpha$ be a measurable partition of $X$ with $\widehat{\alpha}=\mathcal{F}$ (mod $\mu$).
Then $\mu$ can be disintegrated over $\mathcal{F}$ as
\[\mu=\int_X \mu_x d \mu(x),\]
where $\mu_x\in \mathcal{M}(X)$  and $\mu_x(\alpha(x))=1$ for $\mu$-a.e. $x\in X$.
The disintegration can be characterized by the
properties \eqref{meas1} and \eqref{meas3} as follows:
\begin{align}
&\text{for every } f \in L^1(X,\mathcal{B}_X,\mu),\
f \in L^1(X,\mathcal{B}_X,\mu_x)\ \text{for $\mu$-a.e. } x\in X, \label{meas1}\\
& \text{and the map $x \mapsto \int_X  f(y)\,d\mu_x(y)$ is in $L^1(X,\mathcal{F},\mu)$}; \notag\\
& \text{for every } f\in L^1(X,\mathcal{B}_X,\mu),\
\mathbb{E}_{\mu}(f|\mathcal{F})(x)=\int_X f\,d\mu_{x}\ \text{for $\mu$-a.e. } x\in X. \label{meas3}
\end{align}
Then for any $f \in L^1(X,\mathcal{B}_X,\mu)$, one has
\begin{equation*}
\int_X \left(\int_X f(y)\,d\mu_x(y) \right)\, d\mu(x)=\int_X f \,d\mu.
\end{equation*}

The support of $\mu\in \mathcal{M}(X)$ is defined to be the set of
all points $x$ in $X$ for which every open neighborhood $U$ of $x$
has positive measure, that is
\begin{align*}
\mathrm{supp} (\mu)&= \{ x \in X :\ \mu(U) > 0\text{ for every open neighborhood $U$ of $x$}\}\\
&=X \setminus \bigcup\{U\subset X:\ U \text{ is open and } \mu(U)=0\}.
\end{align*}

To prepare the proof of the main results in the next section we state
or prove some useful lemmas in this section. The first one is the
following.

\begin{lem}\label{sigma-bij}
Let $X$ be a compact metric space, $\mu\in \mathcal{M}(X)$ and
$\mathcal{B}_\mu$ be the completion of $\mathcal{B}_X$ under $\mu$.
If $\mathcal{F}_1, \mathcal{F}_2$ are  two sub-$\sigma$-algebras of
$\mathcal{B}_\mu$ with $\mathcal{F}_1\supseteq \mathcal{F}_2$ and
\[\mu=\int_X \mu_{i,x}d\mu(x)\]
is the disintegration of $\mu$ over
$\mathcal{F}_i$ for $i=1,2$, then $\supp(\mu_{1,x})\subseteq
\supp(\mu_{2,x})$ for $\mu$-a.e. $x\in X$.
\end{lem}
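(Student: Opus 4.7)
The plan is to reduce the desired support inclusion to testing against a countable family of open sets. Fix a countable basis $\{U_n\}_{n\ge 1}$ of the topology on $X$. Since a point $y$ lies in $\supp(\nu)$ if and only if every basic $U_n\ni y$ has positive $\nu$-measure, it suffices to show that for $\mu$-a.e.\ $x\in X$ and every $n$,
\[ \mu_{1,x}(U_n) > 0 \ \Longrightarrow\ \mu_{2,x}(U_n) > 0.\]
Once this implication is established for a single $n$ on a $\mu$-conull set, a countable intersection over $n$ produces a single $\mu$-conull set $X_0$ on which $\supp(\mu_{1,x})\subseteq\supp(\mu_{2,x})$.

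The heart of the matter is the tower property. Applying \eqref{meas3} to $f=\mathbf{1}_{U_n}$ identifies $\mu_{i,x}(U_n)=\mathbb{E}_\mu(\mathbf{1}_{U_n}\mid\mathcal{F}_i)(x)$ for $\mu$-a.e.\ $x$ and $i=1,2$. Because $\mathcal{F}_1\supseteq\mathcal{F}_2$, iterating conditional expectations yields
\[ \mu_{2,x}(U_n)=\mathbb{E}_\mu\big(\mu_{1,\cdot}(U_n)\,\big|\,\mathcal{F}_2\big)(x) \quad \text{for $\mu$-a.e. } x. \]
Now set $A_n=\{x:\mu_{2,x}(U_n)=0\}$. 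By \eqref{meas1} the map $x\mapsto\mu_{2,x}(U_n)$ is $\mathcal{F}_2$-measurable, so $A_n\in\mathcal{F}_2$ up to a $\mu$-null set. Multiplying the previous identity by the $\mathcal{F}_2$-measurable indicator $\mathbf{1}_{A_n}$ and integrating gives
\[ \int_{A_n}\mu_{1,x}(U_n)\,d\mu(x)=\int_{A_n}\mu_{2,x}(U_n)\,d\mu(x)=0,\]
and since the integrand is non-negative, $\mu_{1,x}(U_n)=0$ for $\mu$-a.e.\ $x\in A_n$. This is exactly the contrapositive of the desired implication for this $n$.

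The only point requiring any care is the measurability assertion $A_n\in\mathcal{F}_2$, which is needed so that $\mathbf{1}_{A_n}$ can be pulled through the inner conditional expectation; this is immediate from \eqref{meas1}. I do not foresee a serious obstacle: the argument uses no dynamical input, and once the tower identity above is written down, the conclusion follows from a routine computation together with a countable union of $\mu$-null exceptional sets indexed by the basis.
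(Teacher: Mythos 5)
Your argument is correct and is essentially the paper's own proof: both reduce the support inclusion to showing, for each member $U$ of a countable basis, that $\mu_{1,x}(U)=0$ for a.e.\ $x$ in the set $\{x:\mu_{2,x}(U)=0\}$, and both establish this by identifying $\mu_{i,x}(U)$ with $\mathbb{E}_\mu(1_U\mid\mathcal{F}_i)(x)$, replacing that set by an $\mathcal{F}_2$-measurable version, and applying the tower property $\mathbb{E}_\mu(\mathbb{E}_\mu(1_U\mid\mathcal{F}_1)\mid\mathcal{F}_2)=\mathbb{E}_\mu(1_U\mid\mathcal{F}_2)$ before integrating. The final countable intersection over the basis matches the paper's conclusion as well.
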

\begin{proof}
First, choose $X_0\in \mathcal{B}_\mu$ with $\mu(X_0)=1$ such that
$\mu_{1,x},\mu_{2,x}\in \mathcal{M}(X)$ are well defined for all
$x\in X_0$. Since $X$ is a compact metric space, there exists a
countable base $\Gamma$ of the topology of $X$. For each $U\in
\Gamma$ and $i=1,2$, put
$$E_i(U)=\{x\in X_0: \mu_{i,x}(U)=0\}.$$
By \eqref{meas1} and \eqref{meas3}, we have $E_i(U)\in \mathcal{B}_\mu$ for $i=1,2$.

Next we are going to show $\mu(E_2(U)\setminus E_1(U))=0$.
By \eqref{meas3}, there exists
$\widetilde{E_2(U)}\in \mathcal{F}_2$ with
$\mu(\widetilde{E_2(U)}\Delta E_2(U))=0$. Moreover, combining this
with \eqref{meas1} and \eqref{meas3}, one has
\begin{align*}
\int_{E_2(U)}\mu_{1,x}(U)d\mu(x)&=\int_{\widetilde{E_2(U)}}\mu_{1,x}(U)d\mu(x)=
\int_{\widetilde{E_2(U)}}\mathbb{E}_\mu(1_U|\mathcal{F}_1)(x)d\mu(x)\\
&=\int_{\widetilde{E_2(U)}}\mathbb{E}_\mu\big(
\mathbb{E}_\mu(1_U|\mathcal{F}_1)|\mathcal{F}_2\big)(x) d\mu(x)\\
&=\int_{\widetilde{E_2(U)}}\mathbb{E}_\mu(1_U|\mathcal{F}_2)(x)d\mu(x)=\int_{\widetilde{E_2(U)}}\mu_{2,x}(U)
d\mu(x)\\
&=\int_{E_2(U)}\mu_{2,x}(U) d\mu(x)=0.
\end{align*}
This implies that $\mu(E_2(U)\setminus E_1(U))=0$.

Finally, we let $X_1=X_0\setminus \bigcup_{U\in
\Gamma}\big(E_2(U)\setminus E_1(U)\big)$. Then $\mu(X_1)=1$, and it
is clear that for any $x\in X_1$ and $U\in \Gamma$, $x\in E_2(U)$
implies $x\in E_1(U)$. Thus for any $x\in X_1$,
\begin{align*}
\supp(\mu_{2,x})=X\setminus \bigcup\limits_{U\in \Gamma,\ x\in
E_2(U)}U\supseteq X\setminus \bigcup\limits_{U\in \Gamma,\ x\in
E_1(U)}U=\supp(\mu_{1,x}).
\end{align*}
This completes the proof.
\end{proof}

To state the next lemma which is Lemma 4 in~\cite{BHS} we need
some notions. Let $(X,T)$ be a t.d.s., $\mu\in \mathcal{M}(X,T)$ and
$\mathcal{B}_\mu$ be the completion of $\mathcal{B}_X$ under $\mu$.
The {\it Pinsker $\sigma$-algebra} $P_\mu(T)$ is defined as the
smallest sub-$\sigma$-algebra of $\mathcal{B}_\mu$ containing $\{
\xi\in \mathcal{P}_X: h_\mu(T,\xi)=0\}$. It is well known that
$P_\mu(T)=P_\mu(T^{-1})$ and $P_\mu(T)$ is $T$-invariant, i.e.
$T^{-1}P_\mu(T)=P_\mu(T)$.

\begin{lem} \label{pinsker-con}
Let $(X,T)$ be a t.d.s. and $\mu\in \mathcal{M}^e(X,T)$. Then there
exists a sequence of partitions $\{W_i\}_{i=1}^{+\infty}$ in
$\mathcal{P}_X$ and $0=k_1<k_2<\cdots$ such that
\begin{enumerate}
\item $\lim\limits_{i\rightarrow+\infty} \text{diam}(W_i)=0$.
\item  $\lim\limits_{n\rightarrow+\infty} H_\mu(P_n|{\mathcal{P}}^-)=h_\mu(T)$,
where $P_n=\bigvee \limits_{i=1}^n T^{-k_i}W_i$ and $\mathcal{P}=\bigvee_{n=1}^\infty  P_n$.
\item $\bigcap \limits_{n=0}^{+\infty} \widehat{T^{-n}\mathcal{P}^-}=P_\mu(T)$.
\item $(T^{-n}\mathcal{P}^-)(x)\subseteq W^s(x,T)$ for each $n\in \mathbb{N}\cup \{0 \}$ and $x\in X$,
where $(T^{-n}\mathcal{P}^-)(x)$ is the atom of $T^{-n}\mathcal{P}^-$ containing $x$.
\end{enumerate}
\end{lem}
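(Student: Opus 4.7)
The plan is to build $\{W_i\}_{i\geq 1}$ and $\{k_i\}_{i\geq 1}$ by an interlaced induction, meeting~(1) by design so that~(3) and~(4) come essentially for free, and reserving~(2) as the substantive step. For~(1) I choose, at each stage, a finite Borel partition $W_i$ with $\operatorname{diam}(W_i)<1/i$. For~(4), if $y\in(T^{-n}\mathcal{P}^-)(x)$ then $T^{n+m+k_i}x$ and $T^{n+m+k_i}y$ lie in a common atom of $W_i$ for every $m\geq 1$ and $i\geq 1$; given $\varepsilon>0$, picking $i$ with $1/i<\varepsilon$ gives $d(T^N x,T^N y)<\varepsilon$ for all $N\geq n+1+k_i$, whence $y\in W^s(x,T)$. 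For~(3), the diameter condition ensures that $\{W_i\}$ generates $\mathcal{B}_\mu$, so $\mathcal{P}^T=\mathcal{B}_\mu$ regardless of the $\{k_i\}$; since $T^{-1}\mathcal{P}^-\subseteq\mathcal{P}^-$ and $\bigvee_{n\geq 0}T^n\mathcal{P}^-=\mathcal{P}^T=\mathcal{B}_\mu$, the Rokhlin--Sinai theorem identifies $\bigcap_{n\geq 0}T^{-n}\mathcal{P}^-=P_\mu(T)$.

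The substantive work lies in~(2). I would first engineer $h_\mu(T,P_n)\to h_\mu(T)$ by preselecting finite partitions $\alpha_n$ with $h_\mu(T,\alpha_n)>h_\mu(T)-1/n$ (or $>n$ if $h_\mu(T)=\infty$) and arranging each $W_n$ to refine $\alpha_n$ while retaining $\operatorname{diam}(W_n)<1/n$; since $P_n$ refines $W_n$, we have $h_\mu(T,P_n)\geq h_\mu(T,\alpha_n)\to h_\mu(T)$. Because $P_n^-\subseteq\mathcal{P}^-$, the upper bound $H_\mu(P_n\mid\mathcal{P}^-)\leq H_\mu(P_n\mid P_n^-)=h_\mu(T,P_n)\leq h_\mu(T)$ holds automatically, so only the matching lower bound requires effort. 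The key observation is that
\[
\mathcal{P}^-\;=\;P_n^-\;\vee\;\bigvee_{i>n}(T^{-k_i}W_i)^-\;\subseteq\;P_n^-\;\vee\;T^{-k_{n+1}}\mathcal{P}^-,
\]
valid because $(T^{-k_i}W_i)^-\subseteq T^{-k_i}\mathcal{P}^-\subseteq T^{-k_{n+1}}\mathcal{P}^-$ for every $i>n$, from which
\[
H_\mu(P_n\mid\mathcal{P}^-)\;\geq\;H_\mu\!\left(P_n\;\big|\;P_n^-\vee T^{-k_{n+1}}\mathcal{P}^-\right).
\]

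By~(3), $T^{-k}\mathcal{P}^-\searrow P_\mu(T)$ as $k\to\infty$, so reverse martingale convergence of conditional entropy yields
\[
\lim_{k\to\infty}H_\mu\!\left(P_n\;\big|\;P_n^-\vee T^{-k}\mathcal{P}^-\right)\;=\;H_\mu(P_n\mid P_n^-\vee P_\mu(T))\;=\;h_\mu(T,P_n),
\]
where the last equality combines the relative Kolmogorov--Sinai formula with the Abramov--Rokhlin identity $h_\mu(T,\,\cdot\,)=h_\mu(T,\,\cdot\mid P_\mu(T))$ (valid since the Pinsker factor carries zero entropy). I would therefore choose $k_{n+1}$ large enough that the right-hand side of the earlier display exceeds $h_\mu(T,P_n)-1/n$, and the two-sided sandwich then drives $H_\mu(P_n\mid\mathcal{P}^-)\to h_\mu(T)$. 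The principal obstacle is the circularity of this last step: the $\sigma$-algebra $T^{-k_{n+1}}\mathcal{P}^-$ itself depends on the indices $k_i$, $i>n+1$, not yet chosen. I would handle this by a diagonal scheme threaded through the induction, enlarging $k_{n+1}$ so that the required estimate survives uniformly over every admissible continuation; (3) holds for every such continuation, and continuity of conditional entropy delivers the uniformity needed to close the argument.
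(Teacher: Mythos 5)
Your handling of (1) and (4) is correct and matches the intended construction, but there are two linked gaps, in (3) and in (2). First, property (3) is not ``essentially for free.'' For an exhaustive $\sigma$-algebra $\mathcal{F}$ (i.e.\ $T^{-1}\mathcal{F}\subseteq\mathcal{F}$ and $\bigvee_{n\ge0}T^{n}\mathcal{F}=\mathcal{B}_\mu$) the Rokhlin--Sinai theorem gives only the inclusion $\bigcap_{n\ge0}T^{-n}\mathcal{F}\supseteq P_\mu(T)$, not equality: $\mathcal{F}=\mathcal{B}_\mu$ satisfies both hypotheses and has tail $\mathcal{B}_\mu$. The reverse inclusion is precisely what forces the $k_i$ to be chosen carefully, and it genuinely fails for bad choices. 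Concretely, on a two-sided Bernoulli shift take $W_i$ to be the cylinder partition on coordinates $[-2^i,2^i]$ and $k_i=i-1$: then $T^{-j-k_i}W_i$ depends on coordinates down to $j+k_i-2^i\to-\infty$, so $\mathcal{P}^-=\mathcal{B}_\mu$, the tail is $\mathcal{B}_\mu\neq P_\mu(T)$, and (2) fails too since $H_\mu(P_n\mid\mathcal{P}^-)=0$. So both (2) and (3) depend on the inductive choice of the $k_i$, and in the intended argument (3) is a \emph{consequence} of the entropy estimates behind (2), not an input to them.

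Second, your resolution of the circularity in (2) cannot work as stated. You want $k_{n+1}$ so large that $H_\mu(P_n\mid P_n^-\vee T^{-k_{n+1}}\mathcal{P}^-)>h_\mu(T,P_n)-1/n$ uniformly over all admissible continuations, but the example above is an admissible continuation with $T^{-k}\mathcal{P}^-=\mathcal{B}_\mu$ for every $k$, so the left-hand side is $0$ no matter how large $k_{n+1}$ is; no decision made at stage $n+1$ alone can protect the estimate against all futures. (Your intermediate inclusion $(T^{-k_i}W_i)^-\subseteq T^{-k_i}\mathcal{P}^-$ is also false: the right-hand side contains $T^{-j}W_i$ only for $j\ge 2k_i+1$, so the claimed inclusion $\mathcal{P}^-\subseteq P_n^-\vee T^{-k_{n+1}}\mathcal{P}^-$, and with it the displayed entropy inequality, is unjustified.) The correct scheme, which is what the paper's outline of \cite[Lemma~4]{BHS} records, reverses the quantifiers: the condition protecting $P_n$ is imposed at every \emph{later} stage, i.e.\ one chooses each $k_m$ ($m\ge2$) so large that
\begin{equation*}
H_\mu(P_n\mid P_{m-1}^-)-H_\mu(P_n\mid P_m^-)<\frac{1}{n}\cdot\frac{1}{2^{m-n}}\qquad\text{for all }n=1,\dots,m-1.
\end{equation*}
Such $k_m$ exists because $T^{-k}W_m^-$ decreases, as $k\to\infty$, to a tail $\sigma$-algebra contained in $P_\mu(T)$ (Rokhlin--Sinai for the finite partition $W_m$), and conditioning a finite partition on its past joined with a piece of the Pinsker algebra does not lower the conditional entropy. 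Telescoping these finitely many conditions over $m>n$ then gives $H_\mu(P_n\mid\mathcal{P}^-)\ge h_\mu(T,P_n)-1/n$ for the actual $\mathcal{P}$, which yields (2), and (3) follows from (2) by the standard argument.
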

\begin{proof} The proof of the lemma follows directly from that of Lemma 4 in~\cite{BHS}.
For completeness, we outline the construction of $\{ W_i\}_{i=1}^{+\infty}\subset \mathcal{P}_X$
and $0=k_1<k_2<\cdots$. Let $\{ W_i\}_{i=1}^{+\infty}$ be an increasing sequence of finite
partitions of $X$ such that $\lim \limits_{i\rightarrow +\infty}
\text{diam}(W_i)=0$. Take $k_1=0$, then we find inductively
$k_1,k_2,\cdots$ such that for each $m\ge 2$, one has
$$H_{\mu}(P_n|P_{m-1}^-)-H_\mu(P_n|P_m^-)<\frac{1}{n} \frac{1}{2^{m-n}},
n=1,2,\cdots,m-1.$$
It is not hard to check that (1)--(4) hold (see for
example~\cite{Pa} or~\cite{Gl1}).
\end{proof}

We remark that the partition $\mathcal{P}$ constructed in
Lemma~\ref{pinsker-con} is an excellent partition (see \cite{BHS} or
\cite{Pa}), which is a key tool in the proof of our main results.

Let $(X,T)$ be a t.d.s. and $n\geq 2$. The $n$-th fold product
system of $(X,T)$ is denoted by $(X^{(n)}, T^{(n)})$, where
$X^{(n)}=X\times X\times \dotsb \times X$ ($n$-times) and
$T^{(n)}=T\times T\times \dotsb \times T$ ($n$-times). And we set
the diagonal of $X^{(n)}$ as  $\Delta_n=\{(x, x,\dotsc, x)\in
X^{(n)} :\ x\in X\}$, and set $\Delta^{(n)}=\{(x_1,
x_2,\dotsc,x_n)\in X^{(n)}:\ \text{there exist }1\leq i<j\leq
n\text{ with }x_i=x_j\}$.

Let $X$ be a metric space. A subset $K \subset X$ is called \emph{a
Mycielski set} if it is a union of countably many Cantor sets. This
definition was introduced in~\cite{BGKM}. Note that in~\cite{Ak} a
Mycielski set is required to be dense. For convenience we restate
here a version of Mycielski's theorem (see~\cite[Theorem~1]{My})
which we shall use. See~\cite{Ak} for a comprehensive treatment of
this topic.

\begin{lem}[Mycielski] \label{Myc}
Let $X$ be a perfect compact metric space. Assume that for every $n\geq 2$,
$R_n$ is a dense $G_\delta$ subset of $X^{(n)}$.
Then  there exists a dense Mycielski subset $K$ of $X$  such that
for every $n\geq 2$,
\[K^{(n)}\subset R_n\cup \Delta^{(n)}.\]
\end{lem}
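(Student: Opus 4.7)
The plan is a standard Cantor-scheme construction. Since each $R_n$ is dense $G_\delta$ in $X^{(n)}$, write $R_n=\bigcap_{k\ge 1} U_{n,k}$ where each $U_{n,k}$ is open and dense in $X^{(n)}$. Fix a countable base $\{B_j\}_{j\ge 1}$ for the topology of $X$. I will build, by induction on $m\ge 1$, a finite family $\{V_s\}_{s\in I_m}$ of nonempty open subsets of $X$, indexed by the nodes at level $m$ of a finitely branching tree whose branches will describe the Mycielski set $K$.

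At stage $m$ I maintain the following invariants: (i) the closures $\overline{V_s}$, $s\in I_m$, are pairwise disjoint; (ii) $\mathrm{diam}(V_s)<1/m$ for every $s\in I_m$; (iii) every $V_s$ at stage $m-1$ has at least two children $t\in I_m$ with $\overline{V_t}\subset V_s$; (iv) for every $2\le n\le m$ and every $n$-tuple of distinct indices $s_1,\dots,s_n\in I_m$ one has $\overline{V_{s_1}}\times\cdots\times\overline{V_{s_n}}\subset U_{n,m}$; (v) for each $j\le m$, some $V_s$ with $s\in I_m$ is contained in $B_j$. Invariant (iii) is possible because $X$ is perfect. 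Invariants (iv) and (v) are achieved by successive shrinking: starting from candidate children obtained via (iii), I process each of the finitely many $n$-tuples $(s_1,\dots,s_n)$ and each $B_j$ with $j\le m$ in turn; openness and density of $U_{n,m}$ let me shrink the relevant sets $V_{s_i}$ so that their product lands in $U_{n,m}$, and the openness of every $U_{n',m}$ ensures that the inclusions achieved in earlier shrinking steps are preserved.

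Once the construction is complete, each node $s_0$ determines a compact set $K_{s_0}$ consisting of the points lying in a nested sequence $\overline{V_{s_0}}\supset\overline{V_{s_1}}\supset\cdots$ of descendants; by (i)--(iii) each $K_{s_0}$ is a Cantor set. The union $K:=\bigcup_{s_0}K_{s_0}$ over all nodes is a countable union of Cantor sets, hence a Mycielski set, and it is dense in $X$ by invariant (v). Finally, if $x_1,\dots,x_n\in K$ are pairwise distinct, invariants (ii) and (iii) give some $m_0\ge n$ such that at every stage $m\ge m_0$ the points lie in $n$ distinct cells $V_{s_1},\dots,V_{s_n}$ at level $m$; invariant (iv) then yields $(x_1,\dots,x_n)\in U_{n,m}$ for all $m\ge m_0$, hence $(x_1,\dots,x_n)\in R_n$. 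Thus $K^{(n)}\subset R_n\cup\Delta^{(n)}$.

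The main obstacle is the simultaneous bookkeeping at each stage---enforcing diameter shrinkage, density across $\{B_j\}$, and the product condition for every $n$-tuple with $n\le m$---without the requirements interfering. This is resolved by carrying out the $U_{n,m}$-shrinking sequentially, one finite tuple at a time, and using the openness of each $U_{n',m}$ to verify that each subsequent shrinking preserves the inclusions achieved earlier.
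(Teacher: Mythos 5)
The paper does not actually prove this lemma; it is quoted from Mycielski \cite{My} (see also \cite{Ak}), so there is no internal argument to compare against. Your Cantor-scheme construction is the standard proof of the statement, and its overall architecture --- a finitely branching forest of open cells with shrinking diameters, pairwise disjoint closures, at least two children per node (using perfectness), extra cells inserted inside basic open sets to force density, and products of cells pushed into the open dense layers of each $R_n$ one finite tuple at a time, with monotone shrinking preserving earlier inclusions --- is sound and is essentially the argument one finds in \cite{Ak}.

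There is one concrete slip in the final deduction. With invariant (iv) as you state it, stage $m$ only controls the single set $U_{n,m}$, so for distinct $x_1,\dotsc,x_n\in K$ you obtain $(x_1,\dotsc,x_n)\in U_{n,m}$ only for $m\ge m_0$, where $m_0$ depends on the tuple; this gives membership in $\bigcap_{m\ge m_0}U_{n,m}$, which need not equal $R_n=\bigcap_{k\ge 1}U_{n,k}$ --- the sets $U_{n,k}$ with $k<m_0$ are never recovered. The repair is standard and costs one line: replace $U_{n,k}$ by $\bigcap_{j\le k}U_{n,j}$ at the outset (still open and dense), or equivalently strengthen (iv) to require $\overline{V_{s_1}}\times\cdots\times\overline{V_{s_n}}\subset\bigcap_{k\le m}U_{n,k}$. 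A second, smaller point: ``shrinking'' existing cells cannot produce a cell inside a basic set $B_m$ that no current cell meets, so you must allow new roots to enter the forest at each level (your union of $K_{s_0}$ over all nodes already accommodates this); a Baire-category remark shows $B_m\setminus\bigcup_s\overline{V_s}$ is nonempty when such a root is needed, and since $\overline{V_t}\subset V_s\subset B_j$ for children $t$, the density of $K$ then follows. With these two repairs the proof is complete.
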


Let $(X,T)$ be a t.d.s., $n\geq 2$ and $\eta>0$. Recall that a tuple
$(x_1,\dotsc,x_n)\in X^{(n)}$ is \emph{mean Li-Yorke $n$-scrambled
with modulus $\eta$} if
\[\liminf_{N\to +\infty}\frac{1}{N}\sum_{k=1}^N\max_{1\leq i<j\leq n}d\big(T^k x_i , T^k x_j)=0\]
and
\[\limsup_{N\to +\infty}\frac{1}{N}\sum_{k=1}^N\min_{1\leq i<j\leq n}d\big(T^k x_i , T^k x_j)\geq \eta.\]
Denote by $MLY_{n,\eta}(X,T)$ the set of all mean Li-Yorke
$n$-scrambled tuples with modulus $\eta$ in $(X,T)$, and
$MLY_n(X,T)=\bigcup_{\eta>0}MLY_{n,\eta}(X,T)$.

\begin{lem} \label{Mean-LY-Gdelata}
Let $(X,T)$ be a t.d.s., $n\geq 2$ and $\eta>0$. Then
$MLY_{n,\eta}(X,T)$ is a $G_\delta$ subset of $X^{(n)}$.
\end{lem}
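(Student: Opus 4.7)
The plan is to express the two defining conditions of $MLY_{n,\eta}(X,T)$ separately as $G_\delta$ sets and then take their intersection. For each $N\geq 1$, define two continuous functions on $X^{(n)}$ by
\[
f_N(x_1,\dotsc,x_n)=\frac{1}{N}\sum_{k=1}^N\max_{1\leq i<j\leq n}d(T^kx_i,T^kx_j),
\]
\[
g_N(x_1,\dotsc,x_n)=\frac{1}{N}\sum_{k=1}^N\min_{1\leq i<j\leq n}d(T^kx_i,T^kx_j).
\]
Since $T$ and $d$ are continuous and finite $\max$, $\min$ and sums preserve continuity, $f_N$ and $g_N$ are continuous on the compact metric space $X^{(n)}$.

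The liminf condition $\liminf_N f_N(\bar x)=0$ is equivalent to saying that for every $m\geq 1$ and every $M\geq 1$ there exists $N\geq M$ with $f_N(\bar x)<1/m$. Hence the corresponding set can be written as
\[
A=\bigcap_{m\geq 1}\bigcap_{M\geq 1}\bigcup_{N\geq M}\{\bar x\in X^{(n)}:f_N(\bar x)<1/m\},
\]
which is a $G_\delta$ set since each inner set is open by continuity of $f_N$. Similarly, the condition $\limsup_N g_N(\bar x)\geq\eta$ is equivalent to: for every $m\geq 1$ and every $M\geq 1$ there exists $N\geq M$ with $g_N(\bar x)>\eta-1/m$. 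So the corresponding set can be written as
\[
B=\bigcap_{m\geq 1}\bigcap_{M\geq 1}\bigcup_{N\geq M}\{\bar x\in X^{(n)}:g_N(\bar x)>\eta-1/m\},
\]
which is again a $G_\delta$ set by continuity of $g_N$.

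Finally, $MLY_{n,\eta}(X,T)=A\cap B$ is a countable intersection of open sets, hence a $G_\delta$ subset of $X^{(n)}$. There is no real obstacle here; the only point to keep straight is the standard rewriting of $\liminf=0$ and $\limsup\geq\eta$ as countable intersections of unions of open sublevel/superlevel sets of continuous functions.
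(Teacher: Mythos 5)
Your proof is correct and follows essentially the same route as the paper: the paper likewise writes $MLY_{n,\eta}(X,T)$ as the intersection of two sets $P_n(X,T)$ and $D_{n,\eta}(X,T)$, each expressed as a countable intersection over $m$ and $\ell$ of countable unions of open sublevel/superlevel sets of the continuous averaged distance functions. The only (immaterial) difference is the indexing of the Birkhoff-type sums.
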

\begin{proof}
Let
\[P_{n}(X,T)=\bigcap_{m=1}^{+\infty}\bigcap_{\ell=1}^{+\infty}
\Bigl(\bigcup_{N\ge\ell} \bigl\{(x_1,\dotsc,x_n)\in X^{(n)}:
\frac{1}{N}\sum_{k=0}^{N-1}\max_{1\leq i<j\leq n}d\big(T^k x_i , T^k x_j) <\tfrac{1}{m}\bigr\}\Bigr)\]
and
\[D_{n,\eta}(X,T)=\bigcap_{m=1}^{+\infty}\bigcap_{\ell=1}^{+\infty}
\Bigl(\bigcup_{N\ge \ell} \bigl\{(x_1,\dotsc,x_n)\in X^{(n)}:
\frac{1}{N}\sum_{k=0}^{N-1}\min_{1\leq i<j\leq n}d\big(T^k x_i , T^k
x_j)>\eta-\tfrac{1}{m}\bigr\}\Bigr).\] It is easy to check that
$P_n(X,T)$ and $D_{n,\eta}(X,T)$ are $G_\delta$ subsets of
$X^{(n)}$. Then so is $MLY_{n,\eta}(X,T)$, since
$MLY_{n,\eta}(X,T)=P_n(X,T)\bigcap D_{n,\eta}(X,T)$.
\end{proof}

\section{Proofs of the main results}
In this section first we study the stable sets and unstable sets in
positive entropy systems, then give the proofs of the main results. Finally
we show that our main results hold for continuous maps with an obvious
modification.

\subsection{Stable sets and unstable sets in positive entropy systems}
First, we have the following lemma.
\begin{lem} \label{lem-1}
Let $(X,T)$ be a t.d.s. and $\mu\in \mathcal{M}^e(X,T)$ with
$h_\mu(T)>0$. If
\[\mu=\int_X \mu_x d \mu(x)\]
is the disintegration of $\mu$ over the Pinsker $\sigma$-algebra $P_\mu(T)$,
then for $\mu$-a.e. $x\in X$,
\begin{align*}
\overline{W^s(x,T)\cap \supp(\mu_x)}=\supp(\mu_x) \text{ and }
\overline{W^u(x,T)\cap\supp(\mu_x)}=\supp(\mu_x).
\end{align*}
\end{lem}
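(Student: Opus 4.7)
The plan is to exploit the excellent partition from Lemma~\ref{pinsker-con} together with the disintegration formalism from Lemma~\ref{sigma-bij}. Since $P_\mu(T) = P_\mu(T^{-1})$ the disintegration $\{\mu_x\}$ is the same whether we think of $\mu$ as $T$- or $T^{-1}$-invariant, and since $W^u(x,T) = W^s(x,T^{-1})$, the unstable identity will follow from the stable identity by replacing $T$ with $T^{-1}$ (applying Lemma~\ref{pinsker-con} to $T^{-1}$ to produce a corresponding excellent partition). Also, the easy inclusion $\overline{W^s(x,T)\cap\supp(\mu_x)}\subseteq\supp(\mu_x)$ is automatic because $\supp(\mu_x)$ is closed, so the whole content is the reverse inclusion.

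Fix the partition $\mathcal{P}$ of Lemma~\ref{pinsker-con} and set $\mathcal{F}_n = \widehat{T^{-n}\mathcal{P}^-}$. By property (3) the family $\{\mathcal{F}_n\}_{n\ge0}$ is decreasing with $\bigcap_n \mathcal{F}_n = P_\mu(T)$. Let $\mu = \int_X \mu_{n,x}\,d\mu(x)$ be the disintegration over $\mathcal{F}_n$. Since $\mathcal{F}_n \supseteq P_\mu(T)$, Lemma~\ref{sigma-bij} yields $\supp(\mu_{n,x})\subseteq\supp(\mu_x)$ for $\mu$-a.e.\ $x$; in particular $\mu_{n,x}(\supp(\mu_x)) = 1$. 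Moreover $\mu_{n,x}$ is concentrated on the atom $(T^{-n}\mathcal{P}^-)(x)$, and property (4) guarantees $(T^{-n}\mathcal{P}^-)(x) \subseteq W^s(x,T)$ for every $x$. Consequently $\mu_{n,x}\bigl(W^s(x,T)\cap\supp(\mu_x)\bigr) = 1$, so
\[
\supp(\mu_{n,x}) \subseteq \overline{W^s(x,T)\cap\supp(\mu_x)}
\]
for $\mu$-a.e.\ $x$ and every $n\ge0$.

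To finish I will show that $\supp(\mu_x) \subseteq \overline{\bigcup_n \supp(\mu_{n,x})}$ for $\mu$-a.e.\ $x$. The reverse martingale convergence theorem applied to the decreasing family $\mathcal{F}_n \downarrow P_\mu(T)$ gives $\mathbb{E}_\mu(f|\mathcal{F}_n)\to\mathbb{E}_\mu(f|P_\mu(T))$ $\mu$-a.e.\ for each $f\in L^1(\mu)$; equivalently, $\int f\,d\mu_{n,x}\to\int f\,d\mu_x$ for $\mu$-a.e.\ $x$. Since $C(X)$ is separable, I pick a countable dense subset $\{f_k\}\subseteq C(X)$ and, via a single diagonal argument, obtain a $\mu$-conull set on which $\mu_{n,x}\to\mu_x$ in the weak-$*$ topology. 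The portmanteau theorem then yields $\liminf_n \mu_{n,x}(U) \ge \mu_x(U)$ for every open $U$, so if $y\in\supp(\mu_x)$ and $U$ is an open neighborhood of $y$, some $\mu_{n,x}(U) > 0$, i.e.\ $\supp(\mu_{n,x})\cap U\ne\emptyset$. This gives the desired inclusion, and combining with the previous paragraph yields
\[
\supp(\mu_x) \subseteq \overline{W^s(x,T)\cap\supp(\mu_x)},
\]
as required.

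The main obstacle is the passage from the almost-sure reverse martingale convergence (which a priori holds on an $f$-dependent conull set) to a \emph{single} conull set on which $\mu_{n,x}\to\mu_x$ weak-$*$; this is handled by separability of $C(X)$ together with the diagonal argument. Conceptually it is worth noting that the hypothesis $h_\mu(T)>0$ plays no direct role in the argument: it only ensures the statement is nontrivial, since otherwise $P_\mu(T)=\mathcal{B}_\mu$ forces $\mu_x=\delta_x$ and both sides of the claimed identity collapse to $\{x\}$.
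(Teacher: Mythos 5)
Your proposal is correct and follows essentially the same route as the paper: the excellent partition of Lemma~\ref{pinsker-con}, the disintegrations $\mu_{n,x}$ over the decreasing $\sigma$-algebras $\widehat{T^{-n}\mathcal{P}^-}$, Lemma~\ref{sigma-bij}, and decreasing-martingale weak$^*$ convergence $\mu_{n,x}\to\mu_x$ (which the paper cites from Einsiedler--Ward rather than re-deriving). The only cosmetic difference is that you conclude via the open-set half of the portmanteau theorem applied to $\supp(\mu_x)\subseteq\overline{\bigcup_n\supp(\mu_{n,x})}$, whereas the paper applies the closed-set half directly to $\overline{W^s(x,T)\cap\supp(\mu_x)}$.
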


\begin{proof}
Since $P_\mu(T)$ is also the Pinsker $\sigma$-algebra of the system
$(X,\mathcal{B}_\mu,\mu,T^{-1})$ and $W^s(x,T^{-1})=W^u(x,T)$, by
symmetry it suffices to show that for $\mu$-a.e. $x\in X$,
\[\overline{W^s(x,T)\cap \supp(\mu_x)}=\supp(\mu_x). \]
By Lemma~\ref{pinsker-con}, there exist $\{ W_i\}_{i=1}^{+\infty}\subset \mathcal{P}_X$
and $0=k_1<k_2<\cdots$ satisfying that
\begin{enumerate}
\item $\lim\limits_{i\rightarrow+\infty} \text{diam}(W_i)=0$.
\item  $\lim\limits_{n\rightarrow+\infty} H_\mu(P_n|{\mathcal{P}}^-)=h_\mu(T)$,
where $P_n=\bigvee \limits_{i=1}^n T^{-k_i}W_i$ and $\mathcal{P}=\bigvee_{n=1}^\infty  P_n$.
\item $\bigcap \limits_{n=0}^{+\infty} \widehat{T^{-n}\mathcal{P}^-}=P_\mu(T)$.
\item $(T^{-n}\mathcal{P}^-)(x)\subseteq W^s(x,T)$ for each $n\in \mathbb{N}\cup \{0 \}$ and $x\in
X$.
\end{enumerate}

For every $n\ge 0$, let \[\mu=\int_X \mu_{n,x} d \mu(x)\]
be the disintegration of $\mu$ over $\widehat{T^{-n}\mathcal{P}^-}$.
Then for every $n\geq 0$,
\begin{equation}\label{eq-eee}
\mu_{n,x}((T^{-n}\mathcal{P}^-)(x))=1 \text{ and so }\mu_{n,x}(W^s(x,T))=1\
\text{ for $\mu$-a.e. $x\in X$}.
\end{equation}
Moreover, since $$\widehat{\mathcal{P}^-}\supset
\widehat{T^{-1}\mathcal{P}^-}\supset
\widehat{T^{-2}\mathcal{P}^-}\supset \cdots\  \text{and}\
\bigcap_{n=0}^{+\infty} \widehat{T^{-n}\mathcal{P}^-}=P_\mu(T),$$
there exists a set $X_1\in\mathcal{B}_\mu$ with $\mu(X_1)=1$ such
that for any $x\in X_1$, $\lim \limits_{n\rightarrow +\infty}
\mu_{n,x}=\mu_x$ under the weak$^*$ topology (see for example
\cite[Corollary 5.21]{EW}), and for each $n\geq 0$,
\begin{align}\label{eq-eee-1}
\mu_{n,x}(W^s(x,T))=1 \text{ and }\supp(\mu_{n,x})\subseteq
\supp(\mu_{n+1,x})\subseteq \supp(\mu_x) \end{align} by Lemma
\ref{sigma-bij} and \eqref{eq-eee}.

We will show that for every $x\in X_1$, $\supp(\mu_x)=\overline{W^s(x,T)\cap \supp(\mu_{x})}$.

Fix $x\in X_1$. It is clear that $\overline{W^s(x,T)\cap
\supp(\mu_{x})}\subseteq \supp(\mu_x)$. By \eqref{eq-eee-1}, one has
that for any $n\ge 0$
$$\mu_{n,x}(\overline{W^s(x,T)\cap \supp(\mu_{x})})\ge \mu_{n,x}(W^s(x,T)\cap
\supp(\mu_{n,x}))=1.$$ Since $\lim \limits_{n\rightarrow +\infty}
\mu_{n,x}=\mu_x$ under the weak$^*$ topology,
$$\mu_{x}(\overline{W^s(X,T)\cap\supp(\mu_{x})})\geq
\limsup \limits_{n\rightarrow \infty}\mu_{n,x}(\overline{W^s(X,T)\cap \supp(\mu_{x})})=1$$
and then
$\supp(\mu_x)\subset \overline{W^s(x,T)\cap\supp(\mu_{x})}$.
\end{proof}

Now we give the definition of entropy tuples introduced in
\cite{HY}, see \cite{Bl} and \cite{BGH} for the pair case
respectively.

\begin{de}
Let $(X,T)$ be a t.d.s. and $n\geq 2$.
\begin{enumerate}
\item A cover $\mathcal{U}=\{U_1,\dotsc,U_k\}$ of $X$
is said to be \emph{admissible} with respect to $(x_1,\dotsc,x_n)\in X^{(n)}$ if
for each  $1\leq i\leq k$ there exists $j_i$ such that $x_{j_i}\not\in \overline{U_i}$.
\item A tuple $(x_1,\dotsc,x_n)\in X^{(n)}$ is called a \emph{topological entropy $n$-tuple},
if at least two of the points in $(x_1,\dotsc,x_n)$ are distinct and
any admissible open cover with respect to  $(x_1,\dotsc,x_n)$ has positive topological entropy.
Denote by $E_n(X,T)$ the set of all topological entropy $n$-tuples.
\item A tuple $(x_1,\dotsc,x_n)\in X^{(n)}$ is called an
\emph{entropy $n$-tuple for $\mu\in \mathcal{M}(X,T)$},
if  at least two of the points in $(x_1,\dotsc,x_n)$ are distinct and
any admissible Borel partition $\alpha$ with respect to $(x_1,\dotsc,x_n)$
has positive measure-theoretic entropy.
Denote by $E_n^\mu(X,T)$ the set of all entropy $n$-tuples for $\mu$.
\end{enumerate}
\end{de}

Let $(X,T)$ be a t.d.s. and $\mu\in \mathcal{M}^{}(X,T)$. For every
$n\geq 2$, define a measure $\lambda_n(\mu)$ on $(X^{(n)},T^{(n)})$
by letting
$$\lambda_n(\mu)=\int_X \mu_x^{(n)} d \mu (x),$$
where $\mu_x^{(n)}=\mu_x\times \mu_x\times \dotsb \times \mu_x$
($n$-times). It is well known that when $\mu$ is ergodic with
positive entropy (see for example \cite{BGKM,HY}), ${\mu}_x$ is
non-atomic for $\mu$-a.e $x\in X$ and $\lambda_n(\mu)$ is a
$T^{(n)}$-invariant ergodic measure on $X^{(n)}$. The relation
between topological entropy tuples and entropy tuples for $\mu$ is
the following.
\begin{pro}[\cite{BGH, Gl,HY}]\label{thm:HY-06}
Let $(X,T)$ be a t.d.s. and $\mu\in \mathcal{M}^e(X,T)$. Then  for
every $n\geq 2$,
\begin{enumerate}
  \item $E_n^\mu(X,T) =\supp(\lambda_n(\mu))\setminus\Delta_n$, and
  \item $E_n(X,T)=\overline{\bigcup_{\mu\in {\mathcal M}^e(X,T)} E^{\mu}_n(X,T)}\setminus \Delta_n.$
\end{enumerate}
\end{pro}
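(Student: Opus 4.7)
My plan is to establish part (1) first, since it is the measure-theoretic core, and then deduce the topological statement (2) via the variational principle. For part (1), the easier inclusion $\supp(\lambda_n(\mu)) \setminus \Delta_n \subseteq E_n^\mu(X,T)$ goes by contradiction. I would take $(x_1,\dotsc,x_n) \in \supp(\lambda_n(\mu)) \setminus \Delta_n$ and an admissible Borel partition $\alpha = \{A_1,\dotsc,A_k\}$ with admissibility indices $\{j_i\}$, assume $h_\mu(T,\alpha)=0$, and exploit the fact that $\widehat{\alpha} \subseteq P_\mu(T) \pmod\mu$. Disintegrating $\mu = \int \mu_x\,d\mu(x)$ over $P_\mu(T)$ gives $\mu_x(\alpha(x))=1$ for $\mu$-a.e.\ $x$, hence $\mu_x^{(n)}$ is concentrated on $\alpha(x)^{(n)}$, and $\lambda_n(\mu)$ sits inside $\bigcup_{i=1}^k A_i^{(n)} \subseteq \bigcup_{i=1}^k \overline{A_i}^{(n)}$. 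Since the latter is closed, $(x_1,\dotsc,x_n)$ must lie in some $\overline{A_i}^{(n)}$, forcing every $x_j \in \overline{A_i}$ and contradicting admissibility.

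For the reverse inclusion $E_n^\mu(X,T) \subseteq \supp(\lambda_n(\mu))$, I would argue contrapositively. Assuming $(x_1,\dotsc,x_n) \notin \supp(\lambda_n(\mu))$, pick open neighborhoods $U_i \ni x_i$ with $\lambda_n(\mu)(U_1 \times \dotsb \times U_n)=0$, shrunk so that each $\overline{U_i}$ misses at least one of the points $x_j$ (possible after discarding repeated coordinates). Then $\prod_i \mu_x(U_i)=0$ for $\mu$-a.e.\ $x$, so the measurable function $\iota(x) = \min\{i : \mu_x(U_i)=0\}$ is well defined $\mu$-a.e. Because $\iota$ is $P_\mu(T)$-measurable (it is built from conditional expectations of $1_{U_i}$ that are $P_\mu(T)$-measurable after the product vanishes), I would partition $X$ into the level sets $X_i = \{\iota = i\}$ and then intersect with $U_i^c$ or refine inside $U_i$ to obtain an admissible Borel partition $\alpha$ whose atoms are unions of $P_\mu(T)$-measurable sets, hence $h_\mu(T,\alpha)=0$. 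This shows $(x_1,\dotsc,x_n) \notin E_n^\mu(X,T)$.

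For part (2), the direction $\bigcup_\mu E_n^\mu \subseteq E_n$ is straightforward: given an admissible open cover $\mathcal{U}=\{U_1,\dotsc,U_k\}$ with respect to $(x_1,\dotsc,x_n) \in E_n^\mu$, I would form a Borel partition $\alpha=\{A_1,\dotsc,A_k\}$ with $A_i \subseteq U_i$, which is still admissible because $\overline{A_i} \subseteq \overline{U_i}$; then $h_\mu(T,\alpha)>0$ by hypothesis and the standard inequality $h_\mu(T,\alpha) \leq h_{\mathrm{top}}(T,\mathcal{U})$ for partitions finer than $\mathcal{U}$ yields $h_{\mathrm{top}}(T,\mathcal{U})>0$. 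Since $E_n(X,T)$ is closed in $X^{(n)}\setminus \Delta_n$, the closure inclusion follows. For the reverse density statement, given $(x_1,\dotsc,x_n) \in E_n(X,T)$ and a product neighborhood $V_1 \times \dotsb \times V_n$, I would build an admissible open cover whose elements are the complements of small closed balls around each $x_i$ contained in $V_i$; the hypothesis forces this cover to have positive topological entropy, so the variational principle (applied to covers and partitions refining them) furnishes $\mu \in \mathcal{M}^e(X,T)$ with positive measure-theoretic entropy on the associated partition. Combining this with part (1) and a localization argument produces a tuple in $V_1 \times \dotsb \times V_n \cap E_n^\mu(X,T)$.

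The main obstacle is the contrapositive in part (1): translating the negligibility of $\lambda_n(\mu)$ near $(x_1,\dotsc,x_n)$ into a genuinely zero-entropy admissible partition. One must synchronize the choice of the $U_i$ (small enough to yield admissibility but controlled relative to the Pinsker disintegration) with a Pinsker-measurable assignment of which $U_i$ is $\mu_x$-null, and this balancing act carries the technical weight of the whole proposition.
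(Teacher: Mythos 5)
First, note that the paper does not prove this proposition at all: it is quoted as a known result with the citation \cite{BGH, Gl, HY}, so there is no in-paper argument to compare yours against. Judged on its own terms, your sketch of part (1) follows the standard route of \cite{Gl}: the inclusion $\supp(\lambda_n(\mu))\setminus\Delta_n\subseteq E_n^\mu(X,T)$ via the fact that a zero-entropy admissible partition is Pinsker-measurable, so $\lambda_n(\mu)$ is carried by $\bigcup_A \overline{A}^{(n)}$, contradicting admissibility; and the converse via the Pinsker-measurable selection $\iota(x)=\min\{i:\mu_x(U_i)=0\}$, using that $\mu(X_i\cap U_i)=\int_{X_i}\mu_x(U_i)\,d\mu=0$ to trim each level set off $U_i$ up to a null set. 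That part is essentially correct, modulo the bookkeeping you acknowledge.

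Part (2) has a genuine gap. The inequality you invoke --- $h_\mu(T,\alpha)\le h_{\mathrm{top}}(T,\mathcal{U})$ for \emph{every} Borel partition $\alpha$ finer than the open cover $\mathcal{U}$ --- is false: take $\mathcal{U}$ to contain $X$ itself (so $h_{\mathrm{top}}(T,\mathcal{U})=0$) and refine it by a generating partition of a positive entropy system. What is true, and what the argument actually needs, is the local variational principle of \cite{BGH} and \cite{HY}: for each $\mu$ there \emph{exists} a finer partition $\alpha$ with $h_\mu(T,\alpha)\le h_{\mathrm{top}}(T,\mathcal{U})$ (which, after lumping atoms into the $U_i$, is still admissible and gives $E_n^\mu\subseteq E_n$), and conversely $h_{\mathrm{top}}(T,\mathcal{U})=\max_\mu\inf_{\alpha\succeq\mathcal{U}}h_\mu(T,\alpha)$. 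The reverse density direction of (2) is exactly the hard half of this principle together with a nontrivial combinatorial step locating an entropy $n$-tuple for $\mu$ inside the prescribed cells $V_1\times\dotsb\times V_n$; it cannot be extracted from the ordinary (global) variational principle plus ``a localization argument,'' and this is precisely the main content of \cite{HY}. So your proposal reconstructs the measure-theoretic half faithfully but leaves the substance of the topological half unproved.
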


Let $(X,T)$ be a t.d.s. and $n\geq 2$. A tuple $(x_1,\dotsc,x_n)\in
X^{(n)}$ is called \emph{$n$-asymptotic} if
\[\lim_{k\to+\infty}\max_{1\leq i<j\leq n} d(T^kx_i,T^kx_j)=0.\]
The set of all asymptotic $n$-tuples is denoted by $Asy_n(X,T)$. It
was shown in \cite{HY02} that $Asy_2(X,T)$ is of first category if
$(X,T)$ is sensitive. It is known \cite{Gl, HY} that if $\mu$ is
ergodic with positive entropy then $(E_2^\mu(X,T)\cup
\Delta_2,T\times T)$ is transitive and
$\lambda_2(\mu)((E_2^\mu(X,T)\cup \Delta_2)\cap Asy_2(X,T) )=0$
since an asymptotic pair cannot be a transitive point of
$(E_2^\mu(X,T)\cup \Delta_2,T\times T)$. Note that Theorem
\ref{thm-2}(1) states that for every $n\geq 2$, $Asy_n(X,T)\cap
E_n(X,T)$ is dense in $E_n(X,T)$.


\begin{proof}[Proof of Theorem \ref{thm-2}(1)]
Let $\mu$ be an ergodic invariant measure with positive entropy.
Then by Lemma~\ref{lem-1}, there exists $X_1\in{\mathcal{B}_X}$ with
$\mu(X_1)=1$ such that for each $x\in X_1$, ${W^s(x,T)\cap
\supp(\mu_x)}$ is dense in $\supp(\mu_x)$. It is clear that
$W^s(x,T)^{(n)}\subset Asy_n(X,T)$ and thus
$Asy_n(X,T)\cap\supp(\mu_x)^{(n)}\supset W^s(x,T)^{(n)}\cap
\supp(\mu_x)^{(n)}\supset (W^s(x,T)\cap \supp(\mu_x))^{(n)}$ which
implies that
\begin{equation}\label{first}\overline{Asy_n(X,T)\cap\supp(\mu_x)^{(n)}}=\supp(\mu_x)^{(n)}
\ \text{for each}\ x\in X_1.
\end{equation}

By Proposition~\ref{thm:HY-06}(1), $E_n^\mu(X,T)
=\supp(\lambda_n(\mu))\setminus\Delta_n$, so we have
$\lambda_n(\mu)(E_n^\mu(X,T)\cup\Delta_n)=1$. This implies that
there exists $X_2\in {\mathcal{B}_X} $ with $\mu(X_2)=1$ such that
for each $x\in X_2$, one has
$\mu_x^{(n)}(E_n^\mu(X,T)\cup\Delta_n)=1$ by the definition of
$\lambda_n(\mu)$. Thus, $\supp(\mu_x)^{(n)}=\supp\big( \mu_x^{(n)}\big)\subset
E_n^\mu(X,T)\cup\Delta_n$ and hence $\supp(\mu_x)^{(n)}\setminus
\Delta_n\subset E_n^\mu(X,T)$. Now by (\ref{first}) and the fact
that $\mu_x$ is non-atomic, we have for each $x\in X_1\cap X_2$
$$\overline{Asy_n(X,T)\cap E^\mu_n(X,T)}\supset \overline{Asy_n(X,T)\cap
(\supp(\mu_x)^{(n)}\setminus \Delta_n)}=\supp(\mu_x)^{(n)}$$
which implies that
$$\overline{Asy_n(X,T)\cap E^\mu_n(X,T)}\supset \bigcup_{x\in X_1\cap X_2}\supp(\mu_x)^{(n)}.$$ Thus we get that
$$\lambda_n({\mu})\big(\overline{Asy_n(X,T)\cap E^\mu_n(X,T)}\big)\ge
\int_X \mu_x^{(n)}\Big(\bigcup_{x\in X_1\cap X_2}\supp(\mu_x)^{(n)}\Big) d\mu(x)=1$$ which
implies that
\begin{equation}\label{mid}\overline{Asy_n(X,T)\cap E^\mu_n(X,T)}\supset
\supp(\lambda_n(\mu))\supset E_n^\mu(X,T),\end{equation} i.e.
$Asy_n(X,T)\cap E^\mu_n(X,T)$ is dense in $E^\mu_n(X,T)$.

By Proposition~\ref{thm:HY-06}(2) and (\ref{mid}), for each $\mu\in
{\mathcal M}^e(X,T)$ we have
$$\overline{Asy_n(X,T)\cap E_n(X,T)}\supset \overline{Asy_n(X,T)\cap
E_n^\mu(X,T)}=E_n^\mu(X,T),$$
which implies that
$$\overline{Asy_n(X,T)\cap
E_n(X,T)}\supset \overline{\bigcup_{\mu\in {\mathcal
M}^e(X,T)}E_n^\mu(X,T)}\supset E_n(X,T),$$ i.e. $Asy_n(X,T)\cap
E_n(X,T)$ is dense in $E_n(X,T)$.
\end{proof}

\begin{rem} We have defined an asymptotic pair in the positive
direction. It is clear that we may define a two-sided asymptotic
pair $(x,y)$ by $\lim_{n\rightarrow\pm\infty}d(T^nx,T^ny)=0$. It is
known that positive entropy does not imply the existence of a
non-diagonal two-sided asymptotic pair, see \cite[Example 3.4]{LS}
by Lind and Schmidt. In \cite{CL} Chung and Li showed that if
$\Gamma$ is a polycyclic-by-finite group and $\Gamma$ acts on a
compact Abelian group $X$ expansively by automorphisms, then $\{x\in
X: \lim_{\Gamma\ni s\to\infty}(sx,se)=0\}$
is dense in $\{x\in X:(x,e)\in E_2(X,\Gamma)\}$, where $e$ is the unit of $\Gamma$.
\end{rem}

\subsection{Proofs of Theorems \ref{thm-1} and \ref{thm-2}(2)}
\begin{proof}[Proof of Theorem \ref{thm-1}]   Let $\mathcal{B}_{\mu}$ be the  completion of
$\mathcal{B}_X$ under $\mu$. Then $(X,\mathcal{B}_\mu, \mu,T)$ is a
Lebesgue system.  Let $P_\mu(T)$ be the Pinsker $\sigma$-algebra of
$(X,\mathcal{B}_\mu,\mu,T)$. Let \[\mu=\int_X \mu_x d \mu(x)\] be
the disintegration of $\mu$ over $P_\mu(T)$. By Lemma~\ref{lem-1},
there exists a set $X_1\in\mathcal{B}_\mu$ with $\mu(X_1)=1$ such
that for any $x\in X_1$,
\begin{align}\label{assup}
\overline{W^s(x,T)\cap \supp(\mu_x)}=\supp(\mu_x) \text{
and }\overline{W^s(x,T^{-1})\cap
\supp(\mu_x)}=\supp(\mu_x).
\end{align}

For every $n\geq 2$, let $\lambda_n(\mu)$ be the measure on
$(X^{(n)},T^{(n)})$ defined before. 
Recall that $\Delta^{(n)}=\{(x_1, x_2,\dotsc,x_n)\in X^{(n)}:\
\text{there exist }1\leq i<j\leq n\text{ with }x_i=x_j\}$. Since
$\mu_x$ is non-atomic for a.e. $x\in X$, by the Fubini theorem,
$\lambda_n(\mu)(\Delta^{(n)})=0$. It is easy to check that
$$X^{(n)}\setminus \Delta^{(n)}=\bigcup_{k=1}^\infty \{(x_1,\dotsc,x_n)\in X^{(n)}: \min_{1\leq i<j\leq n}
d(x_i,x_j)>\tfrac{1}{k}\}.$$ Thus there exists $\tau>0$ such that
$\lambda_n(\mu)(W_n)>0$, where
$$W_n=\{(x_1,\dotsc,x_n)\in X^{(n)}: \min_{1\leq i<j\leq n}
d(x_i,x_j)>\tau\}.$$ Let $\eta_n=\tau\lambda_n(\mu)(W_n)$, and let
$G_n^+$ be the set of all generic points of $\lambda_n(\mu)$ for
$T^{(n)}$, that is $(x_1,\dotsc,x_n)\in G_n^+$ if and only if
$$\frac{1}{N}\sum_{i=0}^{N-1}\delta_{(T^{(n)})^i(x_1,\dotsc,x_n)}\rightarrow \lambda_n(\mu) $$
under the weak$^*$-topology, where $\delta_y$ is the point mass on $y$.
Then $\lambda_n(\mu)(G_n^+)=1$.
For every $(x_1,\dotsc,x_n)\in G_n^+$,
\begin{align*}
\limsup_{N\rightarrow \infty} \frac{1}{N}\sum_{k=0}^{N-1}\min_{1\leq i<j\leq n}d(T^kx_i,T^kx_j)
&\geq \limsup_{N\rightarrow \infty} \frac{1}{N}\sum_{k=0}^{N-1} \tau\delta_{(T^{(n)})^k(x_1,\dotsc,x_n)}(W_n)\\
&\geq \tau\liminf_{N\rightarrow \infty} \frac{1}{N}\sum_{k=0}^{N-1}\delta_{(T^{(n)})^k(x_1,\dotsc,x_n)}(W_n)\\
&\geq \tau \lambda_n(\mu)(W_n)=\eta_n.
\end{align*}
This shows that $G_n^+\subset D_{n,\eta_n}(X,T)$, where
$D_{n,\eta_n}$ is defined in the proof of
Lemma~\ref{Mean-LY-Gdelata}. Similarly, if we let $G_n^-$ be the set
of all generic points of $\lambda_n(\mu)$ for $(T^{-1})^{(n)}$,
then $\lambda_n(\mu)(G_n^-)=1$ and $G_n^-\subset
D_{n,\eta_n}(X,T^{-1})$.

Since $\mu_x$ is non-atomic for $\mu$-a.e. $x\in X$ and
\[1=\lambda_n(\mu)(G_n^+\cap G_n^-)=\int_X \mu_x^{(n)}(G_n^+\cap G_n^-)d \mu (x),\]
there exists a set $X_2\in \mathcal{B}_\mu$ with $\mu(X_2)=1$ such
that $\mu_x$ is non-atomic and $\mu_x^{(n)}(G_n^+\cap G_n^-)=1$ for
all $x\in X_2$  and all $n\geq 2$. Now let $X_0=X_1\cap X_2$. Then
$\mu(X_0)=1$.

Now fix $x\in X_0$. Then $\supp(\mu_x)$ is a perfect closed subset
of $X$, since $\mu_x$ is non-atomic. By the construction of $X_2$,
we have that $\mu_x^{(n)}(G_n^+\cap G_n^-\cap\supp(\mu_x)^{(n)})=1$, and
then $G_n^+\cap G_n^-\cap\supp(\mu_x)^{(n)}$ is dense in
$\supp(\mu_x)^{(n)}$. So $$D_{n,\eta_n}(X,T)\cap
D_{n,\eta_n}(X,T^{-1})\cap\supp(\mu_x)^{(n)}$$ is a dense $G_\delta$
subset of $\supp(\mu_x)^{(n)}$ by Lemma~\ref{Mean-LY-Gdelata}, and
the facts that $G_n^+\subset D_{n,\eta_n}(X,T)$ and $G_n^-\subset
D_{n,\eta_n}(X,T^{-1})$. By the construction of $X_1$, we have
$Asy_n(X,T)\cap \supp(\mu_x)^{(n)}$ and $Asy_n(X,T^{-1})\cap
\supp(\mu_x)^{(n)}$ are dense in  $\supp(\mu_x)^{(n)}$.
Recall that $P_n(X,T)$ is defined in the proof of Lemma~\ref{Mean-LY-Gdelata}.
Then
$$P_n(X,T)\cap P_n(X,T^{-1})\cap \supp(\mu_x)^{(n)}$$ is a dense
$G_\delta$ subset of $\supp(\mu_x)^{(n)}$
by Lemma~\ref{Mean-LY-Gdelata} and the facts that $Asy_n(X,T)\subset
P_n(X,T)$ and $Asy_n(X,T^{-1})\subset P_n(X,T^{-1})$.
Applying Lemma~\ref{Mean-LY-Gdelata} again, we have that $$MLY_{n,\eta_n}(X,T)\cap
MLY_{n,\eta_n}(X,T^{-1})\cap\supp(\mu_x)^{(n)}$$ is a dense
$G_\delta$ subset of  $\supp(\mu_x)^{(n)}$. By Mycielski's theorem
(Lemma \ref{Myc}), there exists a dense Mycielski subset $K_x$  of
$\supp(\mu_x)$ such that for every $n\geq 2$,
\[K_x^{(n)}\subset \big(MLY_{n,\eta_n}(X,T)\cap MLY_{n,\eta_n}(X,T^{-1})\big)\cup \Delta^{(n)}.\]
Then $K_x$ is as required.
\end{proof}

Recall that Theorem \ref{thm-2}(2) states that for every $n\geq 2$,
$MLY_n(X,T)\cap E_n(X,T)$ is dense in $E_n(X,T)$.
\begin{proof}[Proof of Theorem \ref{thm-2}(2)]
Let $\mu$ be an ergodic invariant measure with positive entropy. By
the proof of Theorem~\ref{thm-1} there exists $\eta_n>0$ such that
for $\mu$-a.e. $x\in X$, $MLY_{n,\eta_n}(x,T)\cap\supp(\mu_x)^{(n)}$
is dense in $\supp(\mu_x)^{(n)}$. By the same argument as in the
proof of Theorem \ref{thm-2}(1), we get the result.
\end{proof}

\begin{cor}\label{cor:K-MLY}
Let $(X,T)$ be a t.d.s. If there is an invariant measure $\mu$ of
full support such that $(X,\mathcal B_X,\mu,T)$ is a Kolmogorov
system, then there exists a dense Mycielski subset $K$ of $X$ such
that for any $n\geq 2$, $K$ is mean Li-Yorke $n$-scrambled (with
modulus $\eta_n>0$).
\end{cor}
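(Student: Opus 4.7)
The plan is to reduce to Theorem~\ref{thm-1} by exploiting the triviality of the Pinsker $\sigma$-algebra in a Kolmogorov system. First I would recall that $(X,\mathcal B_X,\mu,T)$ being Kolmogorov means $P_\mu(T)=\{\emptyset,X\}$ mod $\mu$. In particular $\mu$ is ergodic, and since $\mu$ has full support while $X$ contains more than one point, $h_\mu(T)>0$ (otherwise $P_\mu(T)=\mathcal B_\mu$ would force $\mu$ to be a point mass, contradicting full support). Hence the hypotheses of Theorem~\ref{thm-1} are satisfied.

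The key observation is that the triviality of $P_\mu(T)$ makes the disintegration $\mu=\int_X \mu_x\,d\mu(x)$ over $P_\mu(T)$ trivial, so $\mu_x=\mu$ for $\mu$-a.e.\ $x\in X$. Combined with the full-support hypothesis, this gives $\supp(\mu_x)=\supp(\mu)=X$ for such $x$.

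I would then invoke the proof of Theorem~\ref{thm-1} rather than merely its statement. Although the statement only asserts that $K_x$ is a Mycielski subset of $\overline{W^s(x,T)}\cap\overline{W^u(x,T)}$, the argument there actually produces $K_x$ via Mycielski's theorem (Lemma~\ref{Myc}) as a \emph{dense} Mycielski subset of $\supp(\mu_x)$. Choosing any $x$ in the full-measure set on which both the conclusion of Theorem~\ref{thm-1} holds and $\supp(\mu_x)=X$, the set $K:=K_x$ is a dense Mycielski subset of $X$, and for every $n\geq 2$ it is mean Li-Yorke $n$-scrambled with modulus $\eta_n>0$.

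The only point requiring care is the verification that the Mycielski set in Theorem~\ref{thm-1} is constructed \emph{dense} in $\supp(\mu_x)$ and not merely contained in it; but this is immediate from the application of Lemma~\ref{Myc} to the dense $G_\delta$ subsets of $\supp(\mu_x)^{(n)}$ built in that proof, so no real obstacle arises.
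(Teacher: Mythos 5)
Your proposal is correct and follows essentially the same route as the paper's own proof: triviality of $P_\mu(T)$ makes the disintegration trivial, so $\supp(\mu_x)=\supp(\mu)=X$ for $\mu$-a.e.\ $x$, and one then invokes Theorem~\ref{thm-1} (more precisely its proof, which produces $K_x$ dense in $\supp(\mu_x)$). Your added checks that $\mu$ is ergodic with $h_\mu(T)>0$ and your explicit remark about needing the density of $K_x$ from the proof rather than the statement are sound refinements of the same argument.
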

\begin{proof}
If $(X,\mathcal B_X,\mu,T)$ is a Kolmogorov system, then the Pinsker $\sigma$-algebra
$P_\mu(T)=\{\emptyset,X\}$ (mod $\mu$).
So the disintegration of $\mu$ over $P_\mu(T)$ is trivial, that is for $\mu$-a.e. $x\in X$,
$\mu_x=\mu$. Now the result follows from Theorem~\ref{thm-1}, since $\supp(\mu_x)=\supp(\mu)=X$.
\end{proof}

Recall that we say a t.d.s. $(X,T)$ has \emph{uniformly positive
entropy} (see \cite{Bl}) if any cover of $X$ by two non-dense open
sets has positive entropy. It is not hard to see that a t.d.s.
$(X,T)$ has uniformly positive entropy if and only if $E_2(X,T)\cup
\Delta_2=X^{(2)}$.

\begin{cor}
If a uniformly positive entropy system $(X,T)$ admits an ergodic
invariant measure $\mu$ with full support and $h_\mu(T)>0$, then
there exists a dense Mycielski subset $K$ of $X$ such that $K$ is
mean Li-Yorke $2$-scrambled (with modulus $\eta>0$).
\end{cor}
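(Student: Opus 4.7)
The plan is to use Theorem~\ref{thm-1} applied to the given ergodic measure $\mu$ to produce a common modulus $\eta>0$, and to combine this with the UPE hypothesis to obtain a dense $G_\delta$ subset of $X^{(2)}$ inside $MLY_{2,\eta}(X,T)$; Mycielski's theorem (Lemma~\ref{Myc}) will then deliver the desired dense Mycielski set.

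First I would apply the proof of Theorem~\ref{thm-1} to $\mu$. This produces a modulus $\eta>0$ (depending on $\mu$) and a $\mu$-conull set $X_0\subseteq X$ with $\supp(\mu_x)$ perfect for every $x\in X_0$, and with $MLY_{2,\eta}(X,T)\cap \supp(\mu_x)^{(2)}$ a dense $G_\delta$ subset of $\supp(\mu_x)^{(2)}$. Following the integration argument from the proof of Theorem~\ref{thm-2}(2), I would then deduce that $MLY_{2,\eta}(X,T)$ is dense in $\supp(\lambda_2(\mu))=E_2^\mu(X,T)\cup\Delta_2$ (using Proposition~\ref{thm:HY-06}(1)).

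The crux of the argument is to promote this density from $E_2^\mu(X,T)\cup\Delta_2$ to all of $X^{(2)}$. By UPE, $E_2(X,T)\cup\Delta_2=X^{(2)}$, and by Proposition~\ref{thm:HY-06}(2) the union $\bigcup_\nu E_2^\nu(X,T)$ over ergodic $\nu$ with positive entropy is dense in $X^{(2)}$. Using the full support of the distinguished measure $\mu$ together with UPE, the plan is to show $\supp(\lambda_2(\mu))=X^{(2)}$: concretely, for any non-empty open rectangle $U_1\times U_2\subseteq X^{(2)}$ one needs $\int \mu_x(U_1)\mu_x(U_2)\,d\mu(x)>0$, so that $U_1\times U_2$ meets $\supp(\lambda_2(\mu))$. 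This is the main obstacle, and it is precisely where the interaction between the topological hypothesis (UPE) and the measure-theoretic Pinsker disintegration of $\mu$ must be exploited — intuitively, UPE forbids the disintegration from systematically separating any two non-empty open sets on a $\mu$-conull collection of fibers.

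Once $MLY_{2,\eta}(X,T)$ is established as a dense $G_\delta$ subset of $X^{(2)}$, the space $X$ is perfect (since $\mu$ is non-atomic with full support), so Mycielski's theorem yields a dense Mycielski subset $K\subseteq X$ with $K^{(2)}\subseteq MLY_{2,\eta}(X,T)\cup\Delta_2$. Hence $K$ is a dense Mycielski mean Li-Yorke $2$-scrambled subset of $X$ with modulus $\eta>0$, as required.
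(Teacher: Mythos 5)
Your proposal stalls exactly at the step you yourself flag as ``the main obstacle'': showing that $\supp(\lambda_2(\mu))=X^{(2)}$, i.e.\ that $E_2^\mu(X,T)\cup\Delta_2=X^{(2)}$ for the \emph{given} measure $\mu$. This does not follow from the hypotheses. Uniform positive entropy says $E_2(X,T)\cup\Delta_2=X^{(2)}$, and by Proposition~\ref{thm:HY-06}(2) this is a statement about the closure of $\bigcup_{\nu}E_2^\nu(X,T)$ over \emph{all} ergodic measures $\nu$; it places no such constraint on the single measure $\mu$, whose set of $\mu$-entropy pairs $\supp(\lambda_2(\mu))\setminus\Delta_2$ may well be a proper closed subset of $X^{(2)}\setminus\Delta_2$ even when $\mu$ has full support and positive entropy. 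Your heuristic that ``UPE forbids the disintegration from systematically separating any two non-empty open sets'' is not an argument, and the quantity $\int\mu_x(U_1)\mu_x(U_2)\,d\mu(x)$ can vanish for disjoint open sets $U_1,U_2$ that the Pinsker disintegration happens to separate. So as written the proof has a genuine gap at its central step.

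The paper avoids this issue entirely by never trying to show that $MLY_{2,\eta}$ itself is dense via $\lambda_2(\mu)$. Instead it writes $MLY_{2,\eta}(X,T)=P_2(X,T)\cap D_{2,\eta}(X,T)$ (the mean-proximal and mean-separated halves from Lemma~\ref{Mean-LY-Gdelata}) and proves each factor is a dense $G_\delta$ in $X^{(2)}$ by a different mechanism. For $D_{2,\eta}$: from Theorem~\ref{thm-2}(2) one gets $\eta_0>0$ with $MLY_{2,\eta_0}$ dense in $E_2^\mu(X,T)$, and full support of $\mu$ gives $\Delta_2\subset\overline{E_2^\mu(X,T)}$; then for an arbitrary pair $(x_1,x_2)$ one approximates the diagonal point $(x_1,x_1)$ by a scrambled pair $(x_1',x_1'')$ and uses the triangle inequality to conclude that one of $(x_1',x_2)$, $(x_1'',x_2)$ lies in $D_{2,\eta}$ with $\eta=\eta_0/2$ --- this is why only density of scrambled pairs \emph{near the diagonal} is needed, which full support of $\mu$ does supply. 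For $P_2$: UPE gives $E_2(X,T)\cup\Delta_2=X^{(2)}$, and Theorem~\ref{thm-2}(1) (which quantifies over all ergodic measures) gives density of $Asy_2(X,T)\subset P_2(X,T)$ in $X^{(2)}$. You would need to incorporate some version of this two-step splitting; the route through $\supp(\lambda_2(\mu))=X^{(2)}$ cannot be completed from the stated hypotheses.
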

\begin{proof}
By Theorem\ref{thm-2}(2), there exists $\eta_0>0$ such that
$MLY_{2,\eta_0}(X,T)$ is dense in $E^\mu_2(X,T)$. Since $\mu$ has
full support, $\Delta_2\subset \overline{E^\mu_2(X,T)}\subset
\overline{MLY_{2,\eta_0}(X,T)}$. Let $\eta=\eta_0/2$.

First, we show that $D_{2,\eta}(X,T)$ is dense in $X^{(2)}$.
For any $(x_1,x_2)\in X^{(2)}$ and $\ep>0$, there exists $(x_1',x_1'')\in MLY_{2,\eta_0}(X,T)$ with
$d(x_1,x_1')<\ep$ and $d(x_1,x_1'')<\ep$. Then
\begin{align*}
\eta_0&\leq \limsup_{N\rightarrow \infty} \frac{1}{N}\sum_{k=0}^{N-1}d(T^kx_1',T^kx_1'')\\
&\leq \limsup_{N\rightarrow \infty} \frac{1}{N}\sum_{k=0}^{N-1} \big(d(T^kx_1',T^kx_2)+d(T^kx_1'',T^kx_2)\big)\\
&\leq \limsup_{N\rightarrow \infty} \frac{1}{N}\sum_{k=0}^{N-1} d(T^kx_1',T^kx_2)+
\limsup_{N\rightarrow \infty} \frac{1}{N}\sum_{k=0}^{N-1} d(T^kx_1'',T^kx_2)
\end{align*}
Then either $(x_1',x_2)\in D_{2,\eta}(X,T)$ or $(x_1'',x_2)\in D_{2,\eta}(X,T)$.
This shows that $D_{2,\eta}(X,T)$ is dense in $X^{(2)}$.

Since $(X,T)$ has uniformly positive entropy, by
Theorem\ref{thm-2}(1), $Asy_2(X,T)$ is dense in $X^{(2)}$, and then
$P_2(X,T)$ is also dense in $X^{(2)}$. Therefore,
$MLY_{2,\eta}(X,T)=P_2(X,T)\cap D_{2,\eta}(X,T)$ is a dense
$G_\delta$ subset of $X^{(2)}$. By Mycielski theorem, there exists a
dense Mycielski subset $K$  of $X$ such that $K^2\subset
MLY_{2,\eta}(X,T)\cup \Delta_2$. Then $K$ is a mean Li-Yorke
$2$-scrambled set with modulus $\eta$.
\end{proof}

\subsection{Non-invertible case}
In this subsection, we will generalize the main results to the
non-invertible case. Let $(X,T)$ be a non-invertible t.d.s., i.e.
$X$ is a compact metric space, and $T:X\rightarrow X$ is a
continuous surjective map but not one-to-one.

For a t.d.s. $(X,T)$ with metric $d$, we say that
$(\widetilde{X},\widetilde{T})$ is the \emph{natural extension} of
$(X,T)$, if $\widetilde{X}=\{ (x_1,x_2,\cdots): T(x_{i+1})=x_i,\
i\in \mathbb{N} \}$ is a subspace of the product space
$X^{\mathbb{N}}=\prod\limits_{i=1}^\infty X$ endowed with the
compatible metric $\tilde d$ defined by
$$\tilde d((x_1,x_2,\cdots),(y_1,y_2,\cdots))=\sum_{i=1}^\infty
\frac{d(x_i,y_i)}{2^i},$$ and
$\widetilde{T}:\widetilde{X}\rightarrow \widetilde{X}$ is the shift
homeomorphism, i.e.
$\widetilde{T}(x_1,x_2,\cdots)=(T(x_1),x_1,x_2,\cdots)$. Let
$\pi:\widetilde{X}\rightarrow X$ be the projection to the first
coordinate. Then $\pi:(\widetilde{X},\widetilde{T})\rightarrow
(X,T)$ is a factor map. It is clear that a tuple $(\tilde
x_1,\dotsc,\tilde x_n)$ is asymptotic in $\tilde X$ if and only if
the tuple $(\pi(\tilde x_1),\dotsc,\pi(\tilde x_n))$ is asymptotic
in $X$, that is for every $n\geq 2$, $\pi^{(n)}(Asy_n(\tilde
X,\tilde T))=Asy_n(X,T)$.  Using the fact that for $\tilde x,\tilde
y\in \widetilde X$ and $k, M\geq 0$,
\begin{align*}
\frac 1 2d(T^k\pi(\tilde x),T^k\pi(\tilde y))&\leq
\tilde d(\widetilde T^k \tilde x, T^k\tilde y) \\
&\leq
\sum_{p=1}^{M+1} \frac{d(T^{k-p+1}\pi(\tilde x), T^{k-p+1}\pi(\tilde y))}{2^p}+\frac{\operatorname{diam}(X)}{2^{M}}
\end{align*}
the following lemma is easy to verify.

\begin{lem}\label{lem:pi-dist}
For every $\eta>0$, there exists a $\delta>0$ such that
if $(\tilde x_1,\dotsc,\tilde x_n)\in \widetilde{X}^{(n)}$ is mean Li-Yorke $n$-scrambled with modulus $\eta$
then $(\pi(\tilde x_1),\dotsc,\pi(\tilde x_n))\in {X}^{(n)}$ is mean Li-Yorke $n$-scrambled with modulus $\delta$.
\end{lem}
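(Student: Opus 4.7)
The plan is to verify the two conditions of mean Li-Yorke $n$-scrambling in $X$ separately, exploiting each half of the given sandwich inequality. First fix $M$ large enough that $\operatorname{diam}(X)/2^M<\eta/2$; the value of $\delta$ will be determined at the end.

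For the liminf condition in $X$, I would apply the lower estimate $d(T^k\pi\tilde x,T^k\pi\tilde y)\le 2\tilde d(\widetilde T^k\tilde x,\widetilde T^k\tilde y)$ pairwise, take the maximum over $1\le i<j\le n$ on both sides, Cesaro-average over $k=1,\dots,N$, and take $\liminf_{N\to\infty}$. The hypothesis $\liminf_N N^{-1}\sum_k\max_{i<j}\tilde d(\widetilde T^k\tilde x_i,\widetilde T^k\tilde x_j)=0$ then forces the analogous quantity in $X$ to vanish.

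For the limsup condition, I would use the upper half of the inequality. For any fixed pair $(i,j)$ and any $k\ge M$, combining $\min_{i'<j'}\tilde d\le\tilde d_{ij}$ with the sandwich gives
\[
\min_{i'<j'}\tilde d(\widetilde T^k\tilde x_{i'},\widetilde T^k\tilde x_{j'})\le\sum_{p=1}^{M+1}\frac{d(T^{k-p+1}\pi\tilde x_i,T^{k-p+1}\pi\tilde x_j)}{2^p}+\frac{\operatorname{diam}(X)}{2^M}.
\]
Cesaro-averaging in $k$, passing to $\limsup_N$, and using that $\sum_{p=1}^{M+1}2^{-p}\le 1$ together with the invariance of Cesaro limits under finite time-shifts, the hypothesis on $\widetilde X$ yields
\[
\limsup_N\frac{1}{N}\sum_{k=1}^N d(T^k\pi\tilde x_i,T^k\pi\tilde x_j)\ge\eta-\frac{\operatorname{diam}(X)}{2^M}\ge\frac{\eta}{2}
\]
for \emph{every} pair $(i,j)$.

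The main obstacle is promoting these pairwise lower bounds into a lower bound for $\limsup_N N^{-1}\sum_k\min_{i<j}d(T^k\pi\tilde x_i,T^k\pi\tilde x_j)$. My approach is to apply the upper sandwich estimate not to a fixed pair but to the time-varying $\tilde d$-minimizing pair $(i^*_k,j^*_k)$ at each $k$, and then invoke the finite pigeonhole principle: along any subsequence $N_l\to\infty$ realising the $\limsup$ hypothesis, some single pair $(i^*,j^*)$ serves as $\tilde d$-minimizer on a subset of $\{1,\dots,N_l\}$ of density at least $1/\binom{n}{2}$; restricting the averaged inequality to this density subset should yield a uniform lower bound $\delta=\delta(\eta,n,\operatorname{diam}(X))>0$. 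The delicate point is that the windowed sum on the right pulls in distances from several nearby times and potentially several different pairs, so the combinatorial bookkeeping must carefully trade minima across the window for minima at single times; the boundary/shift corrections of size $O(M/N)$ are routine and vanish in the limit.
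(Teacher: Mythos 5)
Your $\liminf$ half is correct, and so is your fixed-pair computation giving $\limsup_N N^{-1}\sum_{k}d(T^k\pi\tilde x_i,T^k\pi\tilde x_j)\ge\eta-\operatorname{diam}(X)/2^M$ for every pair. The gap sits exactly at the step you call ``delicate bookkeeping'', and it is not bookkeeping: the pigeonhole on the $\tilde d$-minimizing pair ultimately produces only a lower bound on the Ces\`aro average of $d(T^k\pi\tilde x_{i^*},T^k\pi\tilde x_{j^*})$ for one pair $(i^*,j^*)$ --- which you already had for every pair --- and gives no control on $\limsup_N N^{-1}\sum_k\min_{1\le i<j\le n}d(T^k\pi\tilde x_i,T^k\pi\tilde x_j)$. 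That quantity is positive only if \emph{all} pairs are simultaneously separated at a common time, whereas the windowed upper estimate only guarantees that each pair is separated at \emph{some} time in $[k-M,k]$, possibly a different time for each pair. For $n\ge 3$ this cannot be repaired by the sandwich inequality plus combinatorics alone: consider the formal distance pattern in which, on the relevant stretches, at each time exactly one pair is at distance $0$ and all other pairs at distance $\operatorname{diam}(X)$, with the close pair alternating between $(1,2)$ and $(2,3)$. It satisfies every inequality you invoke and has $\min_{i<j}\tilde d\ge\operatorname{diam}(X)/4$ on those stretches, yet $\min_{i<j}d$ vanishes at every single time.

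What rules out such patterns, and what your argument is missing, is the uniform continuity of $T$. Fix $M$ with $\operatorname{diam}(X)/2^M<\eta/4$ and choose $\ep_M>0$ so that $d(u,v)<\ep_M$ implies $d(T^pu,T^pv)<\eta/4$ for all $0\le p\le M$. If $\min_{i<j}d(T^{k-M}\pi\tilde x_i,T^{k-M}\pi\tilde x_j)<\ep_M$, the minimizing pair stays within $\eta/4$ throughout $[k-M,k]$, so the upper half of the sandwich yields $\min_{i<j}\tilde d(\widetilde T^k\tilde x_i,\widetilde T^k\tilde x_j)<\eta/4+\operatorname{diam}(X)/2^M<\eta/2$. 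Contrapositively, the set $A=\{k:\ \min_{i<j}\tilde d(\widetilde T^k\tilde x_i,\widetilde T^k\tilde x_j)\ge\eta/2\}$, which by the Markov inequality applied along the subsequence realizing the $\limsup$ has upper density at least $\eta/(2\operatorname{diam}(X))$, satisfies $\{k-M:\ k\in A\}\subseteq\{m:\ \min_{i<j}d(T^m\pi\tilde x_i,T^m\pi\tilde x_j)\ge\ep_M\}$, and one may take $\delta=\ep_M\,\eta/(2\operatorname{diam}(X))$. (The paper offers no written argument --- it declares the lemma easy to verify from the displayed inequality --- so there is nothing to compare line by line, but your route as described does not close without this continuity input.)
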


Thus we have
\begin{thm}\label{conti}
Let $(X,T)$ be a non-invertible t.d.s.  and $\mu\in
\mathcal{M}^e(X,T)$ with $h_\mu(T)>0$. Then
there exists a sequence of positive numbers $\{\eta_n\}_{n=2}^{+\infty}$
satisfying that for $\mu$-a.e. $x\in X$, there
exists a Mycielski subset $K_x\subseteq\overline{W^s(x,T)}$ such
that for any $n\geq 2$, $K_x$ is mean Li-Yorke $n$-scrambled with
modulus $\eta_n$.
\end{thm}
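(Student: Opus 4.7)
The plan is to deduce Theorem~\ref{conti} from the invertible case (Theorem~\ref{thm-1}) by lifting the system to its natural extension. First I would form $(\tilde X, \tilde T)$ with the projection $\pi\colon \tilde X \to X$ to the first coordinate and lift $\mu$ to its unique $\tilde T$-invariant ergodic extension $\tilde\mu$, which satisfies $\pi_*\tilde\mu = \mu$ and $h_{\tilde\mu}(\tilde T) = h_\mu(T) > 0$.

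Next I would apply Theorem~\ref{thm-1} to $(\tilde X, \tilde T, \tilde\mu)$ to produce positive constants $\{\tilde\eta_n\}_{n\ge 2}$ and, for $\tilde\mu$-a.e.\ $\tilde x \in \tilde X$, a Mycielski set $\tilde K_{\tilde x} \subseteq \overline{W^s(\tilde x, \tilde T)} \cap \overline{W^u(\tilde x, \tilde T)}$ which is mean Li-Yorke $n$-scrambled (for $\tilde T$) with modulus $\tilde\eta_n$ for every $n \ge 2$. For each $n$ let $\eta_n > 0$ be the constant produced by Lemma~\ref{lem:pi-dist} from $\tilde\eta_n$. Since $\pi_*\tilde\mu = \mu$, this $\tilde\mu$-a.e.\ statement descends to a $\mu$-a.e.\ one: for $\mu$-a.e.\ $x \in X$ I would pick any $\tilde x \in \pi^{-1}(x)$ satisfying the above conclusion and set $K_x := \pi(\tilde K_{\tilde x})$. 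Continuity of $\pi$ together with the inclusion $\pi(W^s(\tilde x, \tilde T)) \subseteq W^s(x, T)$ gives $K_x \subseteq \overline{W^s(x, T)}$, and Lemma~\ref{lem:pi-dist} yields that $K_x$ is mean Li-Yorke $n$-scrambled with modulus $\eta_n$ for every $n \ge 2$.

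The main subtlety is verifying that $K_x$ is Mycielski, since in general a continuous image of a Cantor set need not be Cantor. The key observation is that $\pi$ is automatically injective on $\tilde K_{\tilde x}$: if $\tilde y, \tilde z \in \tilde K_{\tilde x}$ satisfied $\pi(\tilde y) = \pi(\tilde z)$, then from the explicit form of the shift, the points $\tilde T^k \tilde y$ and $\tilde T^k \tilde z$ would share their first $k+1$ coordinates, so $\tilde d(\tilde T^k \tilde y, \tilde T^k \tilde z) \le \operatorname{diam}(X)/2^{k+1}$; consequently the averages $\tfrac{1}{N}\sum_{k=1}^N \tilde d(\tilde T^k \tilde y, \tilde T^k \tilde z)$ tend to $0$, contradicting the mean Li-Yorke $2$-scrambled property of $\tilde K_{\tilde x}$. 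Thus $\pi$ restricts to a continuous bijection on each Cantor component $\tilde C_i$ of $\tilde K_{\tilde x}$, hence to a homeomorphism $\tilde C_i \to \pi(\tilde C_i)$ (compact into Hausdorff), so each $\pi(\tilde C_i)$ is Cantor and $K_x = \bigcup_i \pi(\tilde C_i)$ is a Mycielski set, completing the argument.
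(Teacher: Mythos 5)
Your proposal is correct and follows essentially the same route as the paper: pass to the natural extension, lift $\mu$, apply Theorem~\ref{thm-1} there, push the resulting sets down via $\pi$, and control the moduli with Lemma~\ref{lem:pi-dist}. Your extra observation that $\pi$ is injective on $\tilde K_{\tilde x}$ (because two preimages with equal first coordinate are asymptotic under $\tilde T$, contradicting the scrambled property), so that the image of each Cantor component is again a Cantor set, is a worthwhile detail that the paper's proof leaves implicit.
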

\begin{proof}
Let $(\widetilde{X},\widetilde{T})$ be the natural extension of
$(X,T)$ and $\pi:\widetilde{X}\rightarrow X$ be the projection to
the first coordinate. It is well known that there exists
$\widetilde{\mu}\in \mathcal{M}^e(\widetilde{X},\widetilde{T})$ such
that $\pi(\widetilde{\mu})=\mu$. Clearly,
$h_{\widetilde{\mu}}(\widetilde{T})\ge h_\mu(T)>0$. By
Theorem~\ref{thm-1}, there exists a Borel subset
$\widetilde{X}_0\subseteq \widetilde{X}$ with
$\widetilde{\mu}(\widetilde{X}_0)=1$ such that for any
$\widetilde{x}\in \widetilde{T}_0$, there exists a Mycielski subset
$K_{\widetilde{x}}$ of $\overline{W^s(\widetilde{x},\widetilde T)}$
satisfying the conclusion of Theorem~\ref{thm-1}.

Let $X_0=\pi(\widetilde{X}_0)$. Then $X_0$ is a $\mu$-measurable set and $\mu(X_0)=1$.
For any $x\in X_0$, there exists $\widetilde{x}\in \widetilde{X}_0$ such that
$\pi(\widetilde{x})=x$.  Let $K_x=\pi(K_{\widetilde{x}})$.
Then by Lemma~\ref{lem:pi-dist}, $K_x$ satisfies our requirements.
\end{proof}

\begin{thm}\label{cor-310} Let (X,T) be a non-invertible t.d.s. with positive entropy.
Then for every $n\geq 2$, $Asy_n(X,T)\cap E_n(X,T)$ and $MLY_{n}\cap
E_n(X,T)$ are dense in $E_n(X,T)$.
\end{thm}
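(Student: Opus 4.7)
The plan is to reduce to the invertible case via the natural extension, following the strategy already used to deduce Theorem~\ref{conti} from Theorem~\ref{thm-1}. Let $(\widetilde X,\widetilde T)$ be the natural extension of $(X,T)$ and let $\pi:\widetilde X\to X$ be the projection to the first coordinate; then $\widetilde T$ is a homeomorphism with $\pi\circ\widetilde T=T\circ\pi$, and $h_{\mathrm{top}}(\widetilde T)=h_{\mathrm{top}}(T)>0$. Applying Theorem~\ref{thm-2} to $(\widetilde X,\widetilde T)$ yields, for every $n\geq 2$, that both $Asy_n(\widetilde X,\widetilde T)\cap E_n(\widetilde X,\widetilde T)$ and $MLY_n(\widetilde X,\widetilde T)\cap E_n(\widetilde X,\widetilde T)$ are dense in $E_n(\widetilde X,\widetilde T)$.

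I would then transfer these density statements down along $\pi^{(n)}$ using three ingredients. First, $\pi^{(n)}(Asy_n(\widetilde X,\widetilde T))\subseteq Asy_n(X,T)$, which is immediate from $\pi\circ\widetilde T=T\circ\pi$ together with the continuity of $\pi$. Second, $\pi^{(n)}(MLY_n(\widetilde X,\widetilde T))\subseteq MLY_n(X,T)$, which is exactly the content of Lemma~\ref{lem:pi-dist}. Third, and most importantly, the lifting identity
\[
\pi^{(n)}\bigl(E_n(\widetilde X,\widetilde T)\bigr)\setminus\Delta_n=E_n(X,T).
\]
Granted these, the conclusion is routine: given $(x_1,\dotsc,x_n)\in E_n(X,T)$ and $\varepsilon>0$, pick a lift $(\tilde x_1,\dotsc,\tilde x_n)\in E_n(\widetilde X,\widetilde T)$ via the identity above and a neighborhood $\widetilde V$ of it whose $\pi^{(n)}$-image lies in the $\varepsilon$-ball around $(x_1,\dotsc,x_n)$; by the invertible case, $\widetilde V$ meets $Asy_n(\widetilde X,\widetilde T)\cap E_n(\widetilde X,\widetilde T)$ (respectively $MLY_n(\widetilde X,\widetilde T)\cap E_n(\widetilde X,\widetilde T)$), and projecting that point via $\pi^{(n)}$ yields the required approximation, with $\varepsilon$ chosen small enough to keep the image outside $\Delta_n$, which is possible because $(x_1,\dotsc,x_n)$ has at least two distinct coordinates.

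The main obstacle is the lifting identity for topological entropy tuples. I would establish it through the measure-theoretic route of Proposition~\ref{thm:HY-06}: every $\mu\in\mathcal{M}^e(X,T)$ with $h_\mu(T)>0$ admits an ergodic lift $\widetilde\mu\in\mathcal{M}^e(\widetilde X,\widetilde T)$ with $\pi_*\widetilde\mu=\mu$ and $h_{\widetilde\mu}(\widetilde T)=h_\mu(T)$. The classical fact that the Pinsker $\sigma$-algebra is preserved by the natural extension, $P_{\widetilde\mu}(\widetilde T)=\pi^{-1}(P_\mu(T))$ mod $\widetilde\mu$, forces the disintegrations to satisfy $\pi_*\widetilde\mu_{\tilde x}=\mu_{\pi(\tilde x)}$ for $\widetilde\mu$-a.e.\ $\tilde x$, which by Fubini gives $(\pi^{(n)})_*\lambda_n(\widetilde\mu)=\lambda_n(\mu)$. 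Combined with Proposition~\ref{thm:HY-06}(1) and the elementary equality $\pi(\supp(\nu))=\supp(\pi_*\nu)$ for continuous maps between compact metric spaces (together with $\pi^{(n)}(\Delta_n^{\widetilde X})=\Delta_n^X$), this yields $\pi^{(n)}(E_n^{\widetilde\mu}(\widetilde X,\widetilde T))\cup\Delta_n=E_n^\mu(X,T)\cup\Delta_n$ for each such $\mu$; the desired identity then follows by taking closures and unions over $\mu$ via Proposition~\ref{thm:HY-06}(2). The Pinsker compatibility under the natural extension is the only nontrivial input from outside the paper, and every other step is a direct transcription of the argument already carried out for $\widetilde T$.
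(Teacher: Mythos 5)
Your proposal is correct and follows essentially the same route as the paper: pass to the natural extension, apply the invertible case (Theorem~\ref{thm-2}) there, and push the density statements down via $\pi^{(n)}$ using $\pi^{(n)}(Asy_n(\widetilde X,\widetilde T))=Asy_n(X,T)$, Lemma~\ref{lem:pi-dist}, and the lifting identity for entropy tuples. The only difference is that the paper simply cites \cite{HY} for the entropy-tuple lifting, whereas you sketch a proof of it via Proposition~\ref{thm:HY-06} and the (correct, classical) compatibility of the Pinsker $\sigma$-algebra with the natural extension; this is a legitimate elaboration of the cited fact rather than a different argument.
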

\begin{proof}
Let $(\widetilde{X},\widetilde{T})$ be the natural extension of
$(X,T)$ and $\pi:\widetilde{X}\rightarrow X$ be the projection to
the first coordinate. Then  $Asy_n(X,T)=\pi^{(n)}(Asy_n(\widetilde
X,\widetilde T))$, $MLY_n(X,T)=\pi^{(n)}(MLY_n(\widetilde
X,\widetilde T))$ and $E_n(X,T)\subset \pi^{(n)}(E_n(\widetilde
X,\widetilde T))$ (see~\cite{HY}). Thus, the result follows from
Theorem~ \ref{thm-2}.
\end{proof}

\section{Final remarks}

Downarowicz and Lacroix~\cite{DY} showed that positive topological
entropy implies DC1$\frac 12$, which is delicately weaker than DC1.
We restate their result as follows.

\begin{thm}[\cite{DY}]
If a t.d.s. $(X,T)$ has positive entropy, then there exists an
uncountable subset $S$ of $X$ such that for any two distinct points
$x,y\in S$, one has
\begin{enumerate}
\item for every $t> 0$, the set
$\{k\in\mathbb{Z}_+:\ d(T^kx,T^ky)<t \}$ has upper density $1$, and
\item for every $s\in (0,1)$, there exists $t_{s}>0$ such that
the upper density of the set $\{k\in\mathbb{Z}_+:\ d(T^kx,T^ky)>t_{s}\}$ is at least $s$.
\end{enumerate}
\end{thm}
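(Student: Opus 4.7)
My plan is to combine the measure-theoretic framework behind Theorem~\ref{thm-1} with a multi-scale Cantor-tree construction. Fix an ergodic invariant measure $\mu\in\mathcal{M}^e(X,T)$ with $h_\mu(T)>0$, disintegrate $\mu=\int_X\mu_x\,d\mu(x)$ over the Pinsker $\sigma$-algebra $P_\mu(T)$, and form the $T\times T$-ergodic relative product $\lambda_2(\mu)=\int_X\mu_x\times\mu_x\,d\mu(x)$. Since $\mu_x$ is non-atomic for $\mu$-a.e.\ $x$, we have $\lambda_2(\mu)(\Delta_2)=0$, so for each $s\in(0,1)$ we may choose $t_s>0$ with $\lambda_2(\mu)(\{(u,v):d(u,v)>t_s\})>s$.

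A useful preliminary observation is that condition~(1) of the statement is equivalent to $\liminf_{N\to\infty}\frac{1}{N}\sum_{k=0}^{N-1}d(T^kx,T^ky)=0$. Indeed, if the liminf vanishes along a subsequence $N_j$ with average $<\delta_j\to 0$, Markov's inequality yields $|\{k<N_j:d(T^kx,T^ky)\geq t\}|/N_j\leq\delta_j/t\to 0$ for every $t>0$, so $\overline{d}\{k:d(T^kx,T^ky)<t\}=1$; conversely, if that upper density is $1$ along some $t$-dependent subsequence, then $\frac{1}{N_j}\sum d\leq t+\operatorname{diam}(X)\cdot o(1)$, giving $\liminf\leq t$ and hence $\liminf=0$ as $t\to 0$. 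Consequently the $\liminf$ part of Theorem~\ref{thm-1} (specialised to $n=2$) already supplies condition~(1) for pairs in the Mycielski set $K_x$.

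The main difficulty is condition~(2), which is strictly stronger than the $\limsup$ part of mean Li--Yorke. It is \emph{not} enjoyed by $\lambda_2(\mu)$-generic pairs for the ``wrong'' reason: Birkhoff forces $\lim_N\frac{1}{N}\sum d(T^kx,T^ky)=\int d\,d\lambda_2(\mu)>0$, which contradicts the $\liminf=0$ of condition~(1). So one must manufacture pairs whose empirical measures $\frac{1}{N}\sum_{k<N}\delta_{(T^kx,T^ky)}$ have at least two accumulation points: one concentrated on $\Delta_2$ (yielding~(1)) and one close to $\lambda_2(\mu)$, for which $\lambda_2(\mu)(\{d>t_s\})>s$ produces~(2).

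The construction I envision is a multi-scale gluing inside $\supp(\mu_x)$ for a $\mu$-generic $x$. Using Rokhlin towers of rapidly increasing heights $H_j$ for $(X,\mu,T)$, together with the density of $W^s(x,T)\cap\supp(\mu_x)$ in $\supp(\mu_x)$ from Lemma~\ref{lem-1}, I would inductively split a Cantor tree: at stage $j$ each node is replaced by two nearby points whose orbits first spread out over an $H_j$-block of iterates (pushing the empirical average along $N_j=H_j$ toward $\int d\,d\lambda_2(\mu)$, giving~(2)), and then collapse through a stable-set mechanism over the next $h_{j+1}\gg H_j$ iterates (forcing the average along $N'_j=H_j+h_{j+1}$ to tend to $0$, giving~(1)). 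Mycielski's theorem (Lemma~\ref{Myc}), applied to the $G_\delta$ set of pairs in $\supp(\mu_x)^{(2)}$ satisfying both asymptotic requirements, produces the required uncountable $S$. The main technical obstacle is the simultaneous realisation of these two dual empirical-measure constraints all the way down the tree: a shadowing-type argument along the Pinsker fibres is needed to show that the two split points at each stage can be chosen close to the ancestor while achieving both behaviours, and the gap between scales $H_j$ and $h_{j+1}$ must be tuned so that neither asymptotic is destroyed by the presence of the other.
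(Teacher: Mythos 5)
Your setup is exactly the paper's: the paper deduces this statement from the stronger Theorem~\ref{thm-42}, whose sketch uses the Pinsker disintegration, the relative product $\lambda_n(\mu)$, the non-atomicity of $\mu_x$, and the choice of $t_{n,s}$ with $\lambda_n(\mu)\bigl(X^{(n)}\setminus[\Delta^{(n)}]_{t_{n,s}}\bigr)>s$ --- all of which you reproduce for $n=2$. Your reduction of condition~(1) to $\liminf_N\frac1N\sum_{k<N}d(T^kx,T^ky)=0$ (i.e.\ to membership in $P_2(X,T)$) is correct, and so is your observation that $\lambda_2(\mu)$-generic pairs satisfy condition~(2). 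The problem is what you do next. You correctly note that no pair can be simultaneously asymptotic and generic for $\lambda_2(\mu)$, but you conclude from this that one must \emph{construct} pairs whose empirical measures oscillate, via Rokhlin towers, a multi-scale Cantor tree, and ``a shadowing-type argument along the Pinsker fibres.'' That construction is never carried out --- you yourself flag its key step as an unresolved technical obstacle --- and there is no reason to expect any shadowing property in a general positive-entropy system. As written, the core of the argument for producing pairs satisfying (1) and (2) simultaneously is missing.

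The idea you are missing is that no explicit orbit-gluing is needed: Baire category does all the work. Both conditions are $G_\delta$ conditions on $X^{(2)}$. Condition~(1) is the $G_\delta$ set $P_2(X,T)$ of Lemma~\ref{Mean-LY-Gdelata}, and it contains $Asy_2(X,T)$, which meets $\supp(\mu_x)^{(2)}$ densely by Lemma~\ref{lem-1} (density of $W^s(x,T)\cap\supp(\mu_x)$ in $\supp(\mu_x)$). Condition~(2), with the thresholds $t_{s}$ fixed as above and $s$ running over the rationals, is a $G_\delta$ set containing the generic points $G_2^+$ of $\lambda_2(\mu)$, which also meet $\supp(\mu_x)^{(2)}$ in a set of full $\mu_x^{(2)}$-measure, hence densely. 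Two dense $G_\delta$ subsets of the perfect compact set $\supp(\mu_x)^{(2)}$ intersect in a dense $G_\delta$ subset, and Lemma~\ref{Myc} then yields the Mycielski (in particular uncountable) set $S$. A point of this intersection need not be asymptotic nor generic; it merely exhibits each behaviour along its own subsequence of times, which is all that the upper-density formulations of (1) and (2) require. This is precisely the mechanism already used in the proof of Theorem~\ref{thm-1} to combine $P_n$ with $D_{n,\eta}$, so the ``tension'' you identify is resolved for free rather than by a new construction.
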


After a little modification of the proof of Theorem~\ref{thm-1}, we
can strengthen their result as follows.
\begin{thm}\label{thm-42}
Let $(X,T)$ be a t.d.s. If there exists an invariant ergodic measure
$\mu$ with $h_\mu(T)>0$, then for $\mu$-a.e. $x\in X$, there exists
a Mycielski set $K_x\subseteq \overline{W^s(x,T)}\cap
\overline{W^u(x,T)}$ such that for any $n\geq 2$ and any distinct
$n$ points $x_1,\dotsc,x_n\in K_x$, one has
\begin{enumerate}
\item for every $t>0$,  the set
$\{k\in\mathbb{Z}_+:\ \max_{1\leq i<j\leq n}d(T^kx,T^ky)<t \}$ has upper density $1$,  and
\item for every $s\in (0,1)$, there exists $t_{n,s}>0$ such that
the upper density of the set $\{k\in\mathbb{Z}_+:\ \min_{1\leq i<j\leq n}d(T^kx,T^ky)>t_{n,s}\}$
is at least $s$.
\end{enumerate}
\end{thm}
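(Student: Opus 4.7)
My plan is to follow the proof of Theorem~\ref{thm-1} almost verbatim, strengthening both its proximal and its Li-Yorke ingredients. Disintegrate $\mu=\int_X\mu_x\,d\mu(x)$ over the Pinsker $\sigma$-algebra $P_\mu(T)$ and, using Lemma~\ref{lem-1} together with non-atomicity of $\mu_x$ for $\mu$-a.e.\ $x$, fix $X_1\in\mathcal{B}_\mu$ with $\mu(X_1)=1$ such that for every $x\in X_1$ the measure $\mu_x$ is non-atomic and
\[\supp(\mu_x)=\overline{W^s(x,T)\cap\supp(\mu_x)}=\overline{W^u(x,T)\cap\supp(\mu_x)}.\]
Form the ergodic $T^{(n)}$-invariant product $\lambda_n(\mu)=\int_X\mu_x^{(n)}\,d\mu(x)$ and let $G_n^+$ be its set of generic points under $T^{(n)}$, so that $\lambda_n(\mu)(G_n^+)=1$. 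Writing $\bar x=(x_1,\dots,x_n)$, I would build, for every $n\ge 2$, a dense $G_\delta$ subset of $\supp(\mu_x)^{(n)}$ every tuple of which satisfies both~(1) and~(2), and then pass to a Mycielski set.

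For condition~(1), let $\widetilde P_n$ be the set of all $\bar x\in X^{(n)}$ satisfying (1). Openness of $\{\max_{i<j}d(y_i,y_j)<1/m\}$ and the resulting lower semicontinuity of the visit-counting functional yield
\[\widetilde P_n=\bigcap_{m\ge 1}\bigcap_{L\ge 1}\bigcup_{N\ge L}\Bigl\{\bar x\in X^{(n)}:\tfrac 1N\#\bigl\{0\le k<N:\max_{i<j}d(T^kx_i,T^kx_j)<\tfrac 1m\bigr\}>1-\tfrac 1m\Bigr\},\]
so $\widetilde P_n$ is $G_\delta$. Since every $n$-tuple in $W^s(x,T)$ is $n$-asymptotic, $W^s(x,T)^{(n)}\subseteq\widetilde P_n$; together with the density of $W^s(x,T)\cap\supp(\mu_x)$ in $\supp(\mu_x)$ this makes $\widetilde P_n\cap\supp(\mu_x)^{(n)}$ a dense $G_\delta$ subset of $\supp(\mu_x)^{(n)}$.

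Condition~(2) is subtler because the threshold $t_{n,s}$ depends on $s$, which prevents the naive set of tuples satisfying~(2) from being $G_\delta$. I would bypass this by prescribing the thresholds in advance. Since $\mu_x$ is non-atomic $\mu$-a.s., $\lambda_n(\mu)(\Delta^{(n)})=0$; hence for each integer $\ell\ge 2$ there is $\tau_{n,\ell}>0$ with $\lambda_n(\mu)(W_{n,\ell})>1-\tfrac 1\ell$, where $W_{n,\ell}=\{(y_1,\dots,y_n)\in X^{(n)}:\min_{1\le i<j\le n}d(y_i,y_j)>\tau_{n,\ell}\}$. Define
\[\widetilde D_n=\bigcap_{\ell\ge 2}\Bigl\{\bar x\in X^{(n)}:\limsup_{N\to\infty}\tfrac 1N\#\bigl\{0\le k<N:(T^{(n)})^k\bar x\in W_{n,\ell}\bigr\}\geq 1-\tfrac 1\ell\Bigr\},\]
which is $G_\delta$ for the same reason as $\widetilde P_n$. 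For $\bar x\in G_n^+$, openness of $W_{n,\ell}$ together with the standard lower-bound property of generic points yields
\[\liminf_{N\to\infty}\tfrac 1N\#\{0\le k<N:(T^{(n)})^k\bar x\in W_{n,\ell}\}\ge\lambda_n(\mu)(W_{n,\ell})>1-\tfrac 1\ell,\]
so $G_n^+\subseteq\widetilde D_n$. Any $\bar x\in\widetilde D_n$ automatically satisfies~(2): given $s\in(0,1)$, pick $\ell$ with $1-1/\ell\ge s$ and take $t_{n,s}=\tau_{n,\ell}$. Since $\lambda_n(\mu)(G_n^+)=1$, $G_n^+$ has full $\mu_x^{(n)}$-measure for $\mu$-a.e.\ $x$, and hence $\widetilde D_n\cap\supp(\mu_x)^{(n)}$ is dense $G_\delta$ in $\supp(\mu_x)^{(n)}$.

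Finally, for $\mu$-a.e.\ $x\in X_1$ the intersection $(\widetilde P_n\cap\widetilde D_n)\cap\supp(\mu_x)^{(n)}$ is a dense $G_\delta$ subset of the perfect compact space $\supp(\mu_x)^{(n)}$ for every $n\ge 2$, and Mycielski's theorem (Lemma~\ref{Myc}) then produces a dense Mycielski set $K_x\subseteq\supp(\mu_x)\subseteq\overline{W^s(x,T)}\cap\overline{W^u(x,T)}$ satisfying $K_x^{(n)}\subseteq(\widetilde P_n\cap\widetilde D_n)\cup\Delta^{(n)}$ for every $n\ge 2$, which gives both conclusions of the theorem. The main obstacle, relative to Theorem~\ref{thm-1}, is the existential quantifier on $t_{n,s}$ in~(2); the prescribed-threshold construction of $\widetilde D_n$ is the essential new ingredient, after which the argument is a direct transcription of the proof of Theorem~\ref{thm-1}.
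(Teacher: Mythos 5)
Your proposal is correct and follows essentially the same route as the paper's (sketched) proof: the paper likewise works with $\lambda_n(\mu)$, obtains the thresholds from $\lim_{\varepsilon\to 0}\lambda_n(\mu)([\Delta^{(n)}]_\varepsilon)=0$, notes that $Asy_n(X,T)$ lies in the set satisfying (1) and $G_n^+$ in the set satisfying (2), and concludes via Mycielski on $\supp(\mu_x)^{(n)}$. Your explicit $G_\delta$ sets $\widetilde P_n$ and $\widetilde D_n$ with prescribed thresholds $\tau_{n,\ell}$ simply make precise the step the paper dismisses as ``not hard to see.''
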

\begin{proof}[Sketch of the proof]
We keep the notation as in the  proof of Theorem~\ref{thm-1}. Since
$\mu_x$ is atomless for a.e. $x\in X$, by the Fubini theorem,
$\lambda_n(\mu)(\Delta^{(n)})=0$ and then
\[\lim_{\varepsilon\to 0} \lambda_n(\mu)([\Delta^{(n)}]_\varepsilon)=0,\]
where $[\Delta^{(n)}]_\varepsilon =\{(x_1,\dotsc,x_n)\in X^{(n)}:\
\max_{1\leq i<j\leq n}d(x_i,x_j)\leq\varepsilon\}$. For  every
$s\in (0,1)$, there exists $t_{n,s}>0$ such that
$\lambda_n(\mu)([\Delta^{(n)}]_{t_{n,s}})<1-s$. Then
\[\lambda_n(\mu)(X^{(n)}\setminus [\Delta^{(n)}]_{t_{n,s}})> s.\]

For every $n\geq 2$, let $A_n$ and $B_n$ be the set of all points
$(x_1,\dotsc,x_n)\in X^{(n)}$ satisfying the condition (1) and (2),
respectively. Clearly, for every $n\geq 2$, $Asy_n(X,T)\subset A_n$
and $G_n^+\subset B_n$. Then it is not hard to see that for every
$n\geq 2$ and $x\in X_0$, $A_n\cap B_n\cap\supp(\mu_x)^{(n)}$
contains a dense $G_\delta$ subset of  $\supp(\mu_x)^{(n)}$. By
Mycielski's theorem, there exists a dense Mycielski subset $K_x$  of
$\supp(\mu_x)$ such that for every $n\geq 2$, $K_x^{(n)}\subset
(A_n\cap B_n)\cup \Delta^{(n)}$. Then $K_x$ is as required.
\end{proof}

Recall that in \cite{KL} Kerr and Li gave a combinatorial proof of
the fact that positive entropy implies Li-Yorke's chaos. Since the
proofs in \cite{D} and in our paper are measure-theoretical, so
finding a topological or combinatorial proof of the fact that
positive entropy implies DC2 chaos or an uncountable mean Li-Yorke
$n$-scrambled set for $n\ge 2$ is natural. Unfortunately, we do not
have one at this moment.


\begin{thebibliography}{99}
\bibitem{Ak} E.~Akin, \emph{Lectures on Cantor and Mycielski sets for dynamical systems},
Chapel Hill Ergodic Theory Workshops (Contemporary Mathematics,
356). American Mathematical Society, Providence, RI, 2004, 21--79.
%

\bibitem{BJM}
F.~Balibrea, J.~Sm{\'{\i}}tal, and M.~{\v{S}}tef{\'a}nkov{\'a},
\emph{The three versions of distributional chaos}, Chaos Solitons
Fractals \textbf{23}
  (2005), no.~5, 1581--1583.

\bibitem{Bl}
F.~Blanchard, \emph{A disjointness theorem involving topological entropy},
  Bull. Soc. Math. France \textbf{121} (1993), no.~4, 465--478.

\bibitem{BGKM} F.~Blanchard, E.~Glasner, S.~Kolyada and A.~Maass,
    \emph{On Li-Yorke pairs}, J. Reine Angew. Math.
     \textbf{547}~(2002), 51--68.

\bibitem{BGH}
F.~Blanchard, E.~Glasner and B.~Host, \emph{A variation on the variational principle and applications to entropy pairs},
 Ergod. Th. and Dynam. Sys. \textbf{17}~(1997), no.~1, 29--43.

\bibitem{BHS} F.~Blanchard, B.~Host and S.~Ruette,
        \emph{Asymptotic pairs in positive-entropy systems},
        Ergod. Th. and Dynam. Sys. \textbf{22}~(2002), no. 3, 671--686.

\bibitem{BH} F.~Blanchard and W.~Huang,
 \emph{Entropy set, weakly mixing set and Entropy capacity},
 Discrete and Continuous Dynamical Systems Ser. A. \textbf{20}~(2008), no.2, 275--311.


\bibitem{CL}  Nhan-Phu Chung and H. F. Li, \emph{Homoclinic group, IE group, and expansive algebraic
actions}, preprint, arXiv:1103.1567.

\bibitem{D} T.~Downarowicz, \emph{Positive topological entropy implies chaos DC2},
 Proc. Amer. Math. Soc. \textbf{142} (2014), 137--149.

\bibitem{DY} T.~Downarowicz and Y.~Lacroix, \emph{Measure-theoretic chaos},
 Ergod. Th. and Dynam. Sys. \textbf{34}~(2014), no.~1, 110--131.

\bibitem{EW} M.~Einsiedler and T.~Ward,
\emph{Ergodic theory with a view towards number theory}. Graduate
Texts in Mathematics, 259. Springer-Verlag London, Ltd., London,
2011.

\bibitem{FHYZ} C.~Fang, W.~Huang, Y.~Yi and P.~Zhang,
\emph{Dimension of stable sets and scrambled sets in positive finite
entropy systems}, Ergod. Th. and Dynam. Sys. \text{32}~(2012),
no.~2, 599--628.

\bibitem{Gl}  E. Glasner, \emph{A simple characterization of the set $\mu$-entropy pairs and
applications}, Israel J. Math. \textbf{102}~(1997), 13--27.

\bibitem{Gl1} E.~Glasner, \emph{Ergodic theory via joinings},
Mathematical Surveys and Monographs, 101. American Mathematical
Society, Providence, RI, 2003.


\bibitem{GY} E.~Glasner and X.~Ye, \emph{Local entropy theory}, Ergod. Th. and Dynam. Sys. \textbf{29} (2009), no.~2, 321--356.


\bibitem{H} W.~Huang,
    \emph{Stable sets and $\epsilon$-stable sets in positive-entropy systems},
    Commun. Math. Phys. \textbf{279}~(2008), no.~2, 535--557.

\bibitem{HY02}
 W.~Huang and X.~Ye, \emph{Devaney's chaos or 2-scattering implies {L}i-{Y}orke's chaos},
  Topology Appl. \textbf{117} (2002), no.~3, 259--272.

\bibitem{HY}
     W.~Huang and X.~Ye,
     \emph{A local variational relation and applications}, Israel J. of Math.
     \textbf{151}~(2006), 237--280.

\bibitem{KL} D. Kerr and H. F. Li, \emph{Independence in topological and $C^*$-dynamics},  Math. Ann.
\textbf{338}~(2007), no. 4, 869-926.

\bibitem{KL12} D. Kerr and H. F. Li, \emph{Combinatorial
independence and sofic entropy},  Comm. Math. Stat. \textbf{1} (2013), 213--257.

\bibitem{LS} D. Lind and K. Schmidt, \emph{Homoclinic points of algebraic
$\mathbb{Z}^d$-actions}, J. Amer. Math. Soc., {\bf 12} (1999), no.
4, 953-980.


\bibitem{My} J. Mycielski, \emph{Independent sets in topological algebras}, Fund. Math. \textbf{55} (1964), 139-147.

\bibitem{O} P.~Oprocha, \emph{Minimal systems and distributional scrambled
sets},  Bull. Soc. Math. Fr., {\bf 140}(2012), 401-439.

\bibitem{OZ} P. Oprocha and G.H. Zhang,
\emph{Topological aspects of dynamics of pairs, tuples and sets},
in: K.~P.~Hart, P.~Simon, J.~van~Mill (Eds.), Recent Progress in General Topology III,
2014, pp. 665--709, book chapter.

\bibitem{OW}
 D.~Ornstein and B.~Weiss, \emph{Mean distality and tightness},
Proc. Steklov Inst. Math. \textbf{244}~(2004), no. 1, 295--302.

\bibitem{Pa} W.~Parry,
     \emph{Topics in ergodic theory},
      Cambridge Tracts in Mathematics, 75. Cambridge University Press, Cambridge-New York, 1981.

\bibitem{Pi}
R.~Piku{\l}a, \emph{On some notions of chaos in dimension zero}, Colloq. Math.
  \textbf{107} (2007), no.~2, 167--177.

\bibitem{SS}B.~Schweizer and J.~Sm{\'{\i}}tal, \emph{Measures of chaos and a spectral
  decomposition of dynamical systems on the interval}, Trans. Amer. Math. Soc.
  \textbf{344} (1994), no.~2, 737--754.

\bibitem{S} J.~Sm{\'{\i}}tal, \emph{Topological entropy and distributional chaos},
Real Anal. Exchange 2006, 30th Summer Symposium Conference, 61--65

\bibitem{Xiong2005}
J.~Xiong, \emph{Chaos in a topologically transitive system},
Sci. China Ser. A \textbf{48} (2005), no.~7, 929--939.

\end{thebibliography}
\end{document}